\newtheorem{thm}{Theorem}
\newtheorem*{thm*}{Theorem}
\newtheorem{claimm}{Claim}
\newtheorem{lemma}{Lemma}
\newcommand{\N}{\mathbb{N}}
\newcommand{\Q}{\mathbb{Q}}
\newcommand{\Z}{\mathbb{Z}}
\newcommand{\C}{\mathcal{C}}
\newcommand{\Cy}{\textbf{S}(2)}
\newcommand{\Ty}{\overrightarrow{\mathbb{T}}}
\newcommand{\Rt}{\overrightarrow{\mathcal{R}}}
\newcommand{\T}{\mathbb{T}}
\newcommand{\p}{\mathcal{P}}
\newcommand{\pr}{p}
\newcommand{\pj}{q}
\newcommand{\funct}[2]{#1 \longrightarrow #2}
\newcommand{\om}[1]{\textbf{#1},<^{\textbf{#1}}}
\newcommand{\m}[1]{\textbf{#1}}
\newcommand{\mc}[1]{\widetilde{\textbf{#1}}}
\newcommand{\restrict}[2]{#1 \upharpoonright #2}
\newcommand{\arrows}[3]{\longrightarrow {#1}^{#2}_{#3}}
\newcommand{\Aut}{\mathrm{Aut}}
\newcommand{\la}[1]{\stackrel{#1}{\longleftarrow}}
\newcommand{\ra}[1]{\stackrel{#1}{\longrightarrow}}
\newcommand{\drawat}[3]{\makebox[0pt][l]{\raisebox{#2}{\hspace*{#1}#3}}}
\author{C. Laflamme, L. Nguyen Van Th\'e, N. W. Sauer}
\address{Department of Mathematics and Statistics, University of Calgary, 2500 University Drive NW, Calgary, Alberta, Canada, T2N1N4.}
\email{laf@math.ucalgary.ca}
\email{nguyen@math.ucalgary.ca}
\email{nsauer@math.ucalgary.ca}
\title{Partition properties of the dense local order and a colored version of Milliken's theorem}
\subjclass[2000]{Primary: 03E02. Secondary: 05C55, 05D10, 22F05, 22A05}
\keywords{Dense local order, Ramsey theory, Milliken theorem, Topological group dynamics, Universal minimal flow}
\date{September, 2007}
\begin{document}

\begin{abstract}
We study finite dimensional partition properties of the countable homogeneous dense local order (a directed graph closely related to the order structure of the rationals). Some of our results use ideas borrowed from the partition calculus of the rationals and are obtained thanks to a strengthening of Milliken's theorem on trees. 
\end{abstract}

\maketitle

\section{Introduction}

The purpose of this paper is the study of the partition properties of a particular oriented graph, called the \emph{dense local order}. To our knowledge, the dense local order (denoted $\Cy$ in the sequel) appeared first in a work of Woodrow \cite{W}. The attempt then was to characterize the countable tournaments which are homogeneous, that is for which any isomorphism between finite subtournaments can be extended to an automorphism of the whole structure. It was shown that up to isomorphism, there are only two countable homogeneous tournaments which do not embed the tournament $\m{D}$ shown in Figure 1. Those are 1) the tournament corresponding to the rationals $(\Q,<)$ where $x \la{\Q} y$ iff $x<y$ and 2) the dense local order $\Cy$.

\begin{figure}[h]
\setlength{\unitlength}{1mm}
\begin{picture}(20,30)(0,0)

\put(10,15){\circle*{1}}
\put(10,25){\circle*{1}}
\put(0,5){\circle*{1}}
\put(20,5){\circle*{1}}
\put(1,5){\vector(1,0){18}}
\put(9,14){\vector(-1,-1){8}}
\put(19,6){\vector(-1,1){8}}
\put(10,24){\vector(0,-1){8}}
\put(9,24){\vector(-1,-2){9}}
\put(11,24){\vector(1,-2){9}}

\end{picture}
\caption{The tournament $\m{D}$}
\end{figure}

The tournament $\Cy$ is defined as follows: let $\mathbb{T}$ denote the unit circle in the complex plane. Define an oriented graph structure on $\mathbb{T}$ by declaring that there is an arc from $x$ to $y$ iff $0 < \arg (y/x) < \pi$. Call $\Ty$ the resulting oriented graph. The dense local order is then the substructure $\Cy$ of $\Ty$ whose vertices are those points of $\T$ with rational argument. 

A few years later, Lachlan proved in \cite{L} that any countable homogeneous tournament embedding $\m{D}$ also embeds every finite tournament. This completed the classification initiated by Woodrow and showed that up to isomorphism there are only three countable homogeneous tournaments: the rationals, the dense local order and the countable random tournament $\Rt$ (up to isomorphism, the unique countable homogeneous tournament into which every countable tournament embeds). Note that this is in sharp contrast with the more general case of countable homogeneous oriented graphs as there are continuum many such objects (this latter result is due to Henson \cite{H} while the classification of countable homogeneous graphs is due to Cherlin \cite{Ch}). In this paper, we will be interested in Ramsey type questions with the following flavor: given $k \in \N$ and a finite tournament $\m{Y}$, is there a finite tournament $\m{Z}$ such that for every $k$-coloring of the arcs of $\m{Z}$, there is an induced copy $\mc{Y}$ of $\m{Y}$ in $\m{Z}$ where all the arcs have the same color? For this particular problem, the answer could be negative (depending on which $\m{Y}$ we started with) but becomes positive if one is allowed to have at most two colors instead of one single color for the arcs of $\mc{Y}$. More generally, Ramsey-theoretic properties of the rationals and of the random tournament are known in the following sense: given tournaments $\m{X}$, $\m{Y}$ and $\m{Z}$, we write $\m{X} \subset \m{Z}$ when $\m{X}$ is an induced subtournament of $\m{Z}$ and $\m{X} \cong \m{Y}$ when there is an isomorphism from $\m{X}$ onto $\m{Y}$. We define the set $\binom{\m{Z}}{\m{X}}$ as \[\binom{\m{Z}}{\m{X}} = \{ \mc{X} \subset \m{Z} : \mc{X} \cong \m{X}  \} \enspace .\] 

For $k,l$ positive elements of $\N$ (throughout this article, $\N = \{ 0, 1, 2, 3, \ldots \}$) and a
triple $\m{X}, \m{Y}, \m{Z}$ of tournaments, the symbol \[\m{Z} \arrows{(\m{Y})}{\m{X}}{k,l} \] is an abbreviation for the statement: ``For any $\chi : \funct{\binom{\m{Z}}{\m{X}}}{[k]}$ (by $[k]$ we mean the set $\{0,\ldots,k-1\}$), there is $\widetilde{\m{Y}} \in \binom{\m{Z}}{\m{Y}}$ such that $\chi$ does not take more than $l$ values on $\binom{\widetilde{\m{Y}}}{\m{X}}$.'' When $l = 1$, this is simply
written $\m{Z} \arrows{(\m{Y})}{\m{X}}{k}$. Let $\mathcal{Q}$, $\mathcal{T}$ and $\mathcal{C}$ denote the class of all finite subtournaments of $\Q$, $\Rt$ and $\Cy$ respectively. For $\mathcal{K} = \mathcal{Q}, \mathcal{T}$ or $\mathcal{C}$ and $\m{X} \in \mathcal{K}$, a first problem is to determine the value of the \emph{Ramsey degree of $\m{X}$ in $\mathcal{K}$}, denoted $t_{\mathcal{K}}(\m{X})$, defined as the least $l \in \N \cup \{ \infty\}$ such that for every $\m{Y} \in \mathcal{K}$ and every $k \in \N$ there exists $\m{Z} \in \mathcal{K}$ such that \[\m{Z} \arrows{(\m{Y})}{\m{X}}{k,l} \enspace . \] 

A second problem is to determine the value of the \emph{big Ramsey degree of $\m{X}$ in $\mathcal{K}$}. This latter quantity is denoted $T_{\mathcal{K}}(\m{X})$ and is defined as follows: let $\m{F}$ denote the tournament $\Q$ if $\mathcal{K} = \mathcal{Q}$, $\Rt$ if $\mathcal{K} = \mathcal{T}$ and $\Cy$ if $\mathcal{K} = \C$. Then the big Ramsey degree of $\m{X}$ in $\mathcal{K}$ is the least $L \in \N \cup \{ \infty\}$ such that for every $k \in \N$, \[\m{F} \arrows{(\m{F})}{\m{X}}{k,L} \enspace . \] 

For $\mathcal{K} = \mathcal{Q}$, the Ramsey degrees and the big Ramsey degrees are always finite and can be computed effectively. More precisely, every $\m{X} \in \mathcal{Q}$ is such that $t_{\mathcal{Q}}(\m{X}) = 1$. This is an easy consequence of the original Ramsey theorem. By contrast, a much more difficult proof due to Devlin in \cite{D} showed that $T_{\mathcal{Q}}(\m{X}) = \tan^{(2|\m{X}|-1)}(0)$, the $(2|\m{X}|-1)$st derivative of $\tan$ evaluated at $0$. Recall that $\tan'(0)=1$, $\tan^{(3)}(0)=2$, $\tan^{(5)}(0)=16$, $\tan^{(7)}(0)=272$ and that in general \[ \tan^{(2n-1)}(0)=\frac{B_{2n}(-4)^n (1-4^n)}{2n}\] where $(B_n)_{n \in \N}$ is the Bernouilli sequence defined (for example) by \[ \frac{x}{e^x -1}=\sum_{n=0}^{+\infty}\frac{B_n}{n!}x^n \ \ \textrm{for every $|x|<2\pi$} \enspace .\]

For $\mathcal{K} = \mathcal{T}$, Ramsey degrees and big Ramsey degrees have never been studied explicitly but can be determined thanks to other known Ramsey type results. In particular, thanks to a general partition result of Ne\v{s}et\v{r}il and R\"odl, it is known that every $\m{X} \in \mathcal{T}$ has a finite Ramsey degree and that \[t_{\mathcal{T}}(\m{X}) = |\m{X}|!/|\Aut(\m{X})|\] where $\mathrm{Aut}(\m{X})$ denotes the set of all automorphisms of $\m{X}$. On the other hand, $T_{\mathcal{T}}(\m{X})$ is known to be finite and the work \cite{LSV} by Laflamme, Sauer and Vuksanovic on the countable random undirected graph actually shows that its value can be interpreted as the number of representations that $\m{X}$ admits into a certain well-known finite structure (that is, there is an algorithm for every $\m{X}$ determining the value of $T_{\mathcal{T}}(\m{X})$). However, it is still unclear whether this expression can be simplified so as to give a counterpart to Devlin's formula in the context of $\mathcal{T}$.

As for the case $\mathcal{K} = \C$, it does not seem to have been studied by anybody so far and the purpose of the present paper is therefore to fill that gap. We first study the Ramsey degrees in $\C$. Our result here reads as follows: 

\begin{thm}
\label{thm:RdC}
Every element $\m{X}$ of $\C$ has a
Ramsey degree in $\C$ equal to \[t_{\C}(\m{X}) = 2|\m{X}|/|\Aut(\m{X})| \enspace .\]
\end{thm}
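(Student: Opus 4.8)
The plan is to compute $t_\C(\m{X})$ through a \emph{Ramsey expansion} of $\C$, mirroring the way $t_\mathcal{T}(\m{X}) = |\m{X}|!/|\Aut(\m{X})|$ arises from ordered tournaments. Since $\Cy$ lives on the circle and its automorphisms preserve the cyclic orientation together with the relation ``lying within a common semicircle'' --- but never reverse the orientation --- the natural expansion fixes a diameter: its endpoints give a cut point and hence a linear order $\prec$ (reading counterclockwise from the cut), while the two open semicircles split the vertices into two $\prec$-convex blocks $P_0, P_1$. Let $\C^{*}$ be the class of finite $(\m{A}, \prec, P_0, P_1)$ embedding into $(\Cy, \prec, P_0, P_1)$. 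To count the expansions of a fixed $\m{X}$ with $|\m{X}| = n$, sweep the diameter once around the circle: the cut endpoint crosses the $n$ vertices (each crossing alters $\prec$) and the antipodal endpoint crosses the $n$ antipodes (each crossing alters the partition $P_0, P_1$), so the sweep visits exactly $2n$ combinatorial positions. As a finite tournament admits no involutory automorphism, $\Aut(\m{X})$ has odd order and acts on an embedded copy as rotations; this action on the $2n$ positions is free, so $\m{X}$ has exactly $2n/|\Aut(\m{X})| = 2|\m{X}|/|\Aut(\m{X})|$ expansion types in $\C^{*}$.

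For the upper bound I would first establish that $\C^{*}$ is a Ramsey class and then apply the standard reduct argument. Given $\m{Y} \in \C$ and $k \in \N$, fix an expansion $\m{Y}^{*}$ and enumerate the expansion types $\m{X}^{*}_1, \dots, \m{X}^{*}_m$ of $\m{X}$ with $m = 2|\m{X}|/|\Aut(\m{X})|$; iterating the Ramsey property of $\C^{*}$ yields $\m{Z}^{*} \in \C^{*}$ with $\m{Z}^{*} \arrows{(\m{Y}^{*})}{\m{X}^{*}_i}{k}$ for all $i$. For any $\chi : \funct{\binom{\m{Z}}{\m{X}}}{[k]}$ on the reduct $\m{Z}$, lift $\chi$ to each $\binom{\m{Z}^{*}}{\m{X}^{*}_i}$ by forgetting the expansion; the resulting copy $\mc{Y} \in \binom{\m{Z}}{\m{Y}}$ is monochromatic on each type, so every copy of $\m{X}$ inside it is coloured according to which of the $m$ induced expansion types it carries, and $\chi$ takes at most $m$ values on $\binom{\mc{Y}}{\m{X}}$.

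For the lower bound I would invoke the expansion (ordering) property of $\C^{*}$ over $\C$: for each $\m{X}$ there is $\m{Y} \in \C$ such that every expansion of $\m{Y}$ in $\C^{*}$ embeds every expansion of $\m{X}$. Colour each copy $\mc{X} \subset \Cy$ by the isomorphism type of the expansion it inherits from a fixed $(\Cy, \prec, P_0, P_1)$, using exactly $m = 2|\m{X}|/|\Aut(\m{X})|$ colours. By the expansion property every copy of $\m{Y}$ (inside any $\m{Z} \in \C$ containing it) meets copies of $\m{X}$ of all $m$ types, so no $\m{Z}$ can force fewer than $m$ colours, giving $t_\C(\m{X}) \ge 2|\m{X}|/|\Aut(\m{X})|$.

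The main obstacle is proving that $\C^{*}$ is a Ramsey class. Up to the cross-relation between its two semicircles, $(\Cy, \prec, P_0, P_1)$ is a pair of copies of $(\Q, <)$ glued along a shift, so I expect its Ramsey property to follow either from the Ne\v{s}et\v{r}il--R\"odl theorem for ordered tournaments (as used for $t_\mathcal{T}$) restricted to this amalgamation class, or --- more in the spirit of this paper --- to be extracted from the colored strengthening of Milliken's theorem combined with the partition calculus of $\Q$. Checking that $\C^{*}$ is an amalgamation class with the expansion property, and that the diameter-sweep really yields exactly $2|\m{X}|$ admissible expansions (handling the degenerate antipodal configurations), are the remaining delicate points.
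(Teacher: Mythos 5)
Your architecture is the paper's: expand each $\m{X} \in \C$ by a linear order together with a two-block partition coming from a diameter, count the expansions as $2|\m{X}|/|\Aut(\m{X})|$, obtain the upper bound by iterating the Ramsey property of the expanded class over all expansion types, and the lower bound from an expansion property. The counting and the two transfer arguments are sound, though the freeness of the $\Aut(\m{X})$-action on the $2n$ positions needs no no-involution detour: an automorphism fixing a position preserves the induced linear order on $\m{X}$ and is therefore the identity. (Also, the degenerate antipodal configurations you worry about never occur, since no two vertices of $\Cy$ are antipodal --- antipodal points of the circle would carry no arc.)

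However, the two facts you leave open are exactly where the content of the proof lies, and as stated your plan does not close them. First, your ``main obstacle'' dissolves once you observe that $\C^{*}$ is, up to interdefinability, all of $\p _2$: in an expansion $(\m{A},\prec,P_0,P_1)$ the tournament relation is recovered from $(\prec,P_0,P_1)$ alone (follow $\prec$ inside a block, reverse it across blocks), and conversely every finite linear order with a two-block partition projects onto an element of $\C$ (Lemma \ref{lem:ext}); so there is no extra ``cross-relation between the two semicircles'' to carry, and the Ramsey property you need is precisely Theorem 8.4 of \cite{KPT} for $\p _2$ --- no new amalgamation class has to be analyzed and no appeal to Milliken or to Ne\v{s}et\v{r}il--R\"odl is required at this stage. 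Second, the expansion property underlying the lower bound is genuinely nontrivial and you only assert it. The paper proves it (Lemma \ref{lem:extprop}) by exhibiting a concrete witness: take $\m{Y}=\m{C}_m$, the subtournament of $\Cy$ on the $(2m+1)$-st roots of unity with $m$ large enough that $\m{X}$ embeds into $\m{C}_m$. Since $|\Aut(\m{C}_m)|=2m+1$, this tournament has exactly two expansions up to isomorphism (the two alternating partitions of a linear order on $2m+1$ points), and every expansion of $\m{X}$, being induced by a line that simultaneously induces an expansion of $\m{C}_m$, embeds into each of them. Without some such witness your lower-bound coloring does not go through, because a priori a given copy of $\m{Y}$ might meet copies of $\m{X}$ of only some of the $2|\m{X}|/|\Aut(\m{X})|$ types.
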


We then turn to the study of the big Ramsey degrees in $\C$, and prove:

\begin{thm}
\label{thm:bigRdC}
Every element $\m{X}$ of $\C$ has a big Ramsey degree in $\C$ equal to \[T_{\C}(\m{X}) = t_{\C}(\m{X})\tan ^{(2|\m{X}|-1)}(0) \enspace . \]
\end{thm}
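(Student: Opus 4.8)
The plan is to follow the tree-theoretic route that underlies Devlin's computation of $T_{\mathcal{Q}}$, but carried out in the richer setting that keeps track of the orientation data of $\Cy$. First I would fix a faithful coding of $\Cy$ inside a binary tree carrying an extra label at each node. Thinking of $\Cy$ via the points of $\T$ with rational argument, each vertex gets coded by a branch through $2^{<\omega}$ (its address along the circle), the lexicographic comparison at the meet of two branches recovers their relative position, and a $\{+,-\}$-label attached at the meet records on which side of the diameter the two points fall, i.e. whether the local orientation agrees with or reverses the lexicographic order. In this coding a copy of $\Cy$ corresponds to a strong subtree together with a coherent labeling, and an induced copy of a finite $\m{X}\in\C$ corresponds to a finite labeled tree-configuration whose \emph{type} is the isomorphism class of its meet-closure, together with the left/right branching data and the labels.

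The upper bound is then obtained as follows. Given a coloring $\chi:\binom{\Cy}{\m{X}}\to[k]$, I would transport $\chi$ to a finite coloring of the embeddings of the finite labeled trees that code copies of $\m{X}$, and apply the strengthened colored version of Milliken's theorem (the paper's main technical device) to extract a strong subtree on which the induced coloring depends only on the type of the embedded configuration. Since this strong subtree again codes a copy of $\Cy$, one concludes $\Cy \arrows{(\Cy)}{\m{X}}{k,L}$ with $L$ equal to the number of types, and hence $T_{\C}(\m{X})\le L$.

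The heart of the matter is the count of types, which I expect to factor as a product, and this is where I anticipate the main obstacle. The \emph{shape} part --- the isomorphism type of the meet-closure with its left/right data --- is exactly the combinatorial object counted by Devlin, so it contributes the tangent number $\tan^{(2|\m{X}|-1)}(0)$; here one reuses the bijection with alternating structures that produces the derivatives of $\tan$. The \emph{label} part --- the coherent $\{+,-\}$-labelings compatible with realizing $\m{X}$, read modulo $\Aut(\m{X})$ --- should be seen directly as the number of ways of cutting and orienting the circular arrangement of $\m{X}$, i.e. $2|\m{X}|/|\Aut(\m{X})|$, which is precisely the local quantity $t_{\C}(\m{X})$ computed in Theorem~\ref{thm:RdC}. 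The delicate point is to verify that these two counts are genuinely independent, namely that every shape can be decorated by every admissible label and conversely, so that the number of types is exactly the product $t_{\C}(\m{X})\tan^{(2|\m{X}|-1)}(0)$; this requires careful bookkeeping of which labelings survive passage to a strong subtree and of the action of $\Aut(\m{X})$ on configurations.

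Finally, for the matching lower bound I would color each copy of $\m{X}$ in $\Cy$ by its type and argue that no copy of $\Cy$ can avoid a single type: by density and the homogeneity of $\Cy$, each of the $t_{\C}(\m{X})\tan^{(2|\m{X}|-1)}(0)$ types is realized cofinally inside every strong subtree, so every copy of $\Cy$ meets all color classes and at least that many colors are forced. This persistence argument, together with the upper bound above, pins $T_{\C}(\m{X})$ at the claimed value.
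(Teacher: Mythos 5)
Your overall strategy --- code $\Cy$ into a binary tree enriched with orientation data, apply the colored Milliken theorem, and count types as a product of a Devlin ``shape'' factor and a circular ``label'' factor --- is essentially the route the paper takes, but the two points you yourself flag as delicate are genuine gaps, and one of them begins with your coding. A $\{+,-\}$-label attached to the \emph{meet} of two branches is not well defined: among three pairwise incomparable branches $x,y,z$ in a binary tree, two of the three pairwise meets coincide (say $x\wedge y=x\wedge z$), while the pairs $(x,y)$ and $(x,z)$ can disagree about whether they lie on the same side of the diameter, so a single label at that node cannot record both. The consistent datum is a \emph{unary} label on each vertex (which side of a fixed diameter it lies on); equivalently, one lifts $\Cy$ to $\Q_2=(\Q,Q_1,Q_2,<)\in\p_2$ with both parts dense (Lemma \ref{claimm:Q2}), which in tree terms is exactly the level-coloring $\Sigma$ of $[2]^{<\infty}$. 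Once this is done, the ``independence of the two counts'' you worry about is resolved by modularizing rather than by bookkeeping inside a single type count: a copy of $\m{X}$ in $\Cy=\pr(\Q_2)$ lifts to one of the $t_{\C}(\m{X})$ nonisomorphic extensions $\m{A}_i$ of $\m{X}$ in $\p_2$ (Lemma \ref{lem:pi2}), and Theorem \ref{thm:bigRdP} gives each $\m{A}_i$ big Ramsey degree $\tan^{(2|\m{X}|-1)}(0)$ in $\p_2$ \emph{independently of its partition structure}, because the parts are realized by dense level-classes and the Devlin-type count does not see them. Iterating Theorem \ref{thm:bigRdP} once per extension then yields the upper bound $t_{\C}(\m{X})\tan^{(2|\m{X}|-1)}(0)$ directly.

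For the lower bound, ``by density and homogeneity each type is realized cofinally'' is not yet an argument. What actually needs proving is that for \emph{every} copy $C$ of $\Cy$ inside $\Cy$, the induced partition on $C$ has \emph{both} classes dense in $C$; this is the content of Lemma \ref{claimm:ext} and its statement $(*)$, established by a short case analysis on the placement of a pair relative to the two parts, and it is what guarantees that every extension of $\m{X}$ (indeed a whole copy of $\Q_2$) embeds into the lift of $C$, so that the product coloring $\mc{X}\mapsto\bigl(i,\lambda_i(\m{A}(\mc{X}))\bigr)$ attains all $t_{\C}(\m{X})\tan^{(2|\m{X}|-1)}(0)$ values on $\binom{C}{\m{X}}$. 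Homogeneity of $\Cy$ alone does not deliver this persistence; you need this lemma (or an equivalent statement about labeled types surviving passage to strong subtrees) spelled out.
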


As a direct corollary, for every natural $k>0$ and every coloring $\chi : \funct{\Cy}{[k]}$, there is an isomorphic copy of $\Cy$ inside $\Cy$ on which $\chi$ takes only $2$ colors (this statement is not as obvious as it looks), and $2$ is the best possible bound. On the other hand, for every $k$-coloring of the arcs of $\Cy$, there is an isomorphic copy of $\Cy$ inside $\Cy$ where only $8$ colors appear, and $8$ is the best possible bound.

Theorem \ref{thm:RdC} and Theorem \ref{thm:bigRdC} are proved thanks to a connection between the class $\C$ and some other classes of finite structures for which several Ramsey properties are already known. Those are the classes $\p _n$ of all finite structures of the form $\m{A} = (A, <^{\m{A}}, P_1 ^{\m{A}},\ldots,P_n ^{\m{A}})$ where $<^{\m{A}}$ is a linear ordering on $A$ and $\{P_1 ^{\m{A}},\ldots,P_n ^{\m{A}} \}$ is a partition of $A$ into disjoint sets. Given two such structures $\m{A}$ and $\m{B}$, an isomorphism is an order-preserving bijection $f$ from $A$ to $B$ such that for every $x \in A$, $x \in P_i^{\m{A}}$ iff $f(x) \in P_i^{\m{B}}$. As it was the case for the class $\C$, there is a unique countable homogeneous structure whose class of finite substructures is $\p _n$. In this paper, this structure is denoted $\Q _n$. The role that $\Q_n$ plays with respect to $\p _n$ is exactly the same as the role that $\Cy$ plays for the class $\C$. As for $\Cy$, the structure $\Q_n$ can be represented quite simply. Namely, the structure $\Q _n$ can be seen as $(\Q , Q_1 ,\ldots,Q_n,<)$ where $\Q$ denotes the rationals, $<$ denotes the usual ordering on $\Q$, and every $Q_i$ is a dense subset of $\Q$. The notions of Ramsey degrees and big Ramsey degrees in $\p _n$ are then defined in exactly the same way as they are for $\C$. The Ramsey degrees in $\p _n$ are known: every element in $\p_n$ has a Ramsey degree in $\p _n$ equal to one. This result, in the case $n=2$, is one of the key facts in our proof of Theorem \ref{thm:RdC}. As for the big Ramsey degrees, we are able to prove that:

\begin{thm}
\label{thm:bigRdP}
Let $n$ be a positive natural. Then every element $\m{X}$ of $\p_n$ has a big Ramsey degree in $\p _n$ equal to $\tan ^{(2|\m{X}|-1)}(0)$.
\end{thm}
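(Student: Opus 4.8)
The plan is to establish Theorem~\ref{thm:bigRdP} by combining the known Devlin bound for the rationals with a structural analysis of how the coloring of the partition classes interacts with the order. The target quantity $\tan^{(2|\m{X}|-1)}(0)$ is precisely Devlin's big Ramsey degree $T_{\mathcal{Q}}(\m{X})$ for the underlying linear order of $\m{X}$, so the content of the theorem is that adding the partition into $n$ dense classes $Q_1,\ldots,Q_n$ does \emph{not} increase the big Ramsey degree beyond what the order alone already forces. I would therefore split the argument into a lower bound (some coloring witnesses that at least $\tan^{(2|\m{X}|-1)}(0)$ colors are unavoidable) and an upper bound (this many colors always suffice inside a copy of $\Q_n$).

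For the upper bound, first I would recall the combinatorial coding of $\Q_n$ on a tree. The standard approach represents $\Q$ via the binary tree $2^{<\omega}$ (or a suitable dense subtree), where each rational is coded by a node and the lexicographic order recovers the linear order; here one enriches each node with a label in $\{1,\ldots,n\}$ recording which class $Q_i$ the point belongs to, and density of each $Q_i$ must be arranged so that every cone contains points of every label. The key technical engine is the strengthening of Milliken's theorem on trees advertised in the abstract, applied in a \emph{colored} (or labeled) setting: given a coloring $\chi$ of $\binom{\Q_n}{\m{X}}$, I would pull it back to a coloring of an appropriate family of strong subtrees (or embeddings of finite labeled trees) and apply the colored Milliken theorem to extract a single strong subtree on which $\chi$ depends only on the isomorphism type of the ``tree envelope'' of a copy of $\m{X}$. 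Each copy of $\m{X}$ sits inside finitely many envelopes, and the number of relevant envelope-types compatible with a fixed order type is exactly what Devlin's computation counts; the labels are controlled because within a single strong subtree the labeling is rendered homogeneous, contributing no extra multiplicity. This yields a copy of $\Q_n$ on which $\chi$ takes at most $\tan^{(2|\m{X}|-1)}(0)$ values.

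For the lower bound, I would exhibit an explicit coloring realizing $\tan^{(2|\m{X}|-1)}(0)$ distinct colors persistently. The natural candidate colors each copy $\mc{X} \subset \Q_n$ by the isomorphism type of its induced tree envelope inside the coding tree; Devlin's analysis shows there are exactly $\tan^{(2|\m{X}|-1)}(0)$ such types realized cofinally, and a pigeonhole/self-similarity argument shows every copy of $\Q_n$ contains a copy of $\m{X}$ of each type. One must check here that the presence of the partition labels does not allow any of these order-theoretic types to be avoided or merged: since each class $Q_i$ is dense, any copy of $\Q_n$ still carries the full rich supply of order configurations, so no type is lost.

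The main obstacle I expect is the precise formulation and application of the colored Milliken theorem so that the \emph{label multiplicity is exactly trivial}. A priori one might fear that tracking $n$ labels multiplies the degree by some factor depending on $n$ (as indeed happens for $\C$, where the factor $t_{\C}(\m{X})$ appears in Theorem~\ref{thm:bigRdC}); the crux is to show that for $\p_n$, unlike for $\C$, the labels can be absorbed into the choice of coding tree and homogenized away, leaving the bound independent of $n$. Getting the bookkeeping of envelopes-with-labels right, and verifying that the colored Milliken theorem applies to the correct notion of labeled strong subtree while preserving density of all $n$ classes, is the delicate heart of the proof; the Devlin count itself is then quoted rather than re-derived.
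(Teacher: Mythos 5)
Your overall architecture coincides with the paper's: code $\Q_n$ as the binary tree $[2]^{<\infty}$ with levels labeled cyclically by $[n]$, pull a coloring of $\binom{\Q_n}{\m{X}}$ back to a coloring of embedding (envelope) types, homogenize with the level-colored strengthening of Milliken's theorem, and then match the count of surviving types with Devlin's $\tan^{(2|\m{X}|-1)}(0)$. The upper/lower bound split and the use of a distinguished antichain realizing exactly the Devlin types are likewise as in the paper.

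The genuine gap sits exactly where you flag the ``delicate heart,'' and the one sentence you offer to close it is not correct. You claim the labels contribute no extra multiplicity ``because within a single strong subtree the labeling is rendered homogeneous.'' It cannot be: the colored Milliken theorem must return a strong subtree whose level-coloring sequence still takes every value of $[n]$ infinitely often (otherwise the subtree no longer codes a copy of $\Q_n$ with all $n$ classes dense), so the labeling on the output subtree is emphatically not homogeneous. In the colored tree an embedding type records the colors of \emph{all} nodes of the meet-closure, including the interior meet nodes, so a priori each uncolored Devlin type splits into roughly $n^{|\m{X}|-1}$ colored types and the bound would depend on $n$. What actually kills this factor in the paper is the construction of the distinguished antichain $X=\{x_f\}$: one first builds a meet-closed copy $W=\{w_f\}$ of the tree occupying only levels divisible by $n$, so that every meet of two elements of $X$ carries one and the same level-color, and then attaches $x_f = w_f^{\frown}01^{\frown}0^i$ with $i$ chosen so that the color of $x_f$ matches the prescribed partition class. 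With the interior colors frozen and the terminal colors dictated by $\m{X}$ itself, the colored embedding types realized inside any copy of $\Q_n$ within $X$ are in bijection with the uncolored Devlin types, and only then does the induction identifying their number with $\tan^{(2|\m{X}|-1)}(0)$ apply. Without this (or an equivalent device) your upper and lower bounds do not meet at an $n$-independent value, which is the whole content of the theorem.
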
   

Equivalently, for every element $\m{X}$ of $\p _n$, $\tan ^{(2|\m{X}|-1)}(0)$ is the least possible natural such that for every natural $k>0$, \[ \Q _n \arrows{(\Q _n)}{\m{X}}{k, \tan ^{(2|\m{X}|-1)}(0)}\enspace .\]

Again, the corresponding result for $n=2$ turns out to be crucial for our purposes. Here, it is one of the ingredients of our proof of Theorem \ref{thm:bigRdC}. Theorem \ref{thm:bigRdP} is obtained by following ideas borrowed from Devlin \cite{D} together with a strengthening of a theorem of Milliken \cite{Mi}: consider a finitely branching tree (in the order-theoretic sense) $T$ of infinite height, a number $m$, and a subset $S\subset T$. If $S$ satisfies certain properties listed in Section \ref{section:Milliken}, we say that $S$ is a \emph{strong subtree of $T$ of height $m$}. According to Milliken's theorem, if we assign a color to each strong subtree of height $m$ out of a finite family of colors then there exists a strong subtree of infinite height such that all strong subtrees of height $m$ contained in it have the same color. In the version we need in order to prove Theorem \ref{thm:bigRdP}, each level of the tree is assigned a color (out of a finite set not related to the set of colors of subtrees). We then consider only strong subtrees of height $m$ with some given level-coloring structure and we look
for a strong subtree of infinite height with a level-coloring structure similar to that of the original tree.

The paper is organized as follows: in section \ref{section:Extensions}, we define the notion of extension in $\p _2$ for any element of $\C$ and show that the number of nonisomorphic extensions in $\p_2$ of a given element of $\C$ can be expressed simply in terms of the size of its automorphism group. In section \ref{section:RdC}, we use this result to compute Ramsey degrees in $\C$ and to prove Theorem \ref{thm:RdC}. In section \ref{section:bigRdC} we turn to the study of big Ramsey degrees and show how Theorem \ref{thm:bigRdC} follows from Theorem \ref{thm:bigRdP}. The two remaining sections of the paper are devoted to a proof of Theorem \ref{thm:bigRdP}. The first step is carried out in section \ref{section:Milliken} where we prove a strengthening of Milliken's theorem on trees. Together with Devlin's original ideas from \cite{D}, this result is then used to derive Theorem \ref{thm:bigRdP}. 

\

\textbf{Ackowledgements}: C. Laflamme was supported by NSERC of Canada Grant\# 690404. L. Nguyen Van Th\'e would like to thank the support 
of the Department of Mathematics \& Statistics Postdoctoral Program at the University of Calgary. N. W.  Sauer was supported by NSERC of Canada Grant \# 691325. We would also like to thank the anonymous referee whose numerous and helpful comments improved the paper considerably.  

\section{Extensions of circular tournaments}

\label{section:Extensions}

The purpose of this section is to establish a connection between the elements of $\C$ and the elements of $\p _2$. This connection is not new: it already appears in \cite{L} and in \cite{Ch} as well as in several other papers. Here, it enables us to deduce most of our results from an analysis of the partition calculus on $\p _2$. This is done by defining a notion of \emph{extension} for every element of $\C$: 

For $\m{A} = (A; <^{\m{A}}, P_1 ^{\m{A}}, P_2 ^{\m{A}})$ with $<^{\m{A}}$ a linear ordering on $A$ and $(P_1 ^{\m{A}}, P_2 ^{\m{A}})$ a partition of $A$, let $\sim ^{\m{A}}$ denote the equivalence relation induced by $(P_1 ^{\m{A}}, P_2 ^{\m{A}})$. Let then $\pr (\m{A})$ denote the oriented graph based on $A$ and equipped with the arc relation denoted $\la{\m{A}}$ and such that \[a \la{\m{A}} b \ \ \textrm{iff} \ \  \left((a \sim ^{\m{A}} b \ \ \textrm{and} \ \ a<^{\m{A}}b) \ \ \textrm{or} \ \ (a \nsim ^{\m{A}} b \ \ \textrm{and} \ \ b<^{\m{A}}a)\right)\enspace .\] 

This construction is illustrated in Figure \ref{fig:construction}. 

\begin{figure}[h]
\begin{center}
\hskip-10pt\includegraphics{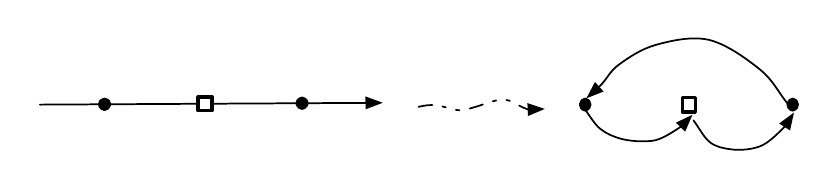}%
\drawat{-75.7mm}{10.5mm}{$a_1$}%
\drawat{-65mm}{10.5mm}{$a_2$}%
\drawat{-56mm}{10.5mm}{$a_3$}%
\drawat{-28mm}{10.5mm}{$a_1$}%
\drawat{-17mm}{10.5mm}{$a_2$}%
\drawat{-5.4mm}{10.5mm}{$a_3$}%
\drawat{-78.88mm}{0.68mm}{$\m{A}$}%
\drawat{-29mm}{0.68mm}{$\pr(\m{A})$}%
\end{center}
\caption{Construction of $p(A)$}
\label{fig:construction}
\end{figure}

In words: interpret $<^{\m{A}}$ as a directed graph relation $\la{}$ where $x \la{} y$ iff $x <^{\m{A}} y$. Then reverse all the arcs between the elements of $\m{A}$ which are not $\sim ^{\m{A}}$-equivalent. It should be clear that $\pr (\m{A})$ is a tournament. For a tournament $\m{X}$, any $\m{A}$ such that $\pr (\m{A}) =  \m{X}$ is called an \emph{extension of $\m{X}$}.

\begin{lemma}

\label{lem:ext}

Let $\m{A} \in \p _2$. Then $\pr(\m{A}) \in \C$.  

\end{lemma}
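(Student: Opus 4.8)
The plan is to prove the lemma by exhibiting an explicit isomorphic copy of $\pr(\m{A})$ as an induced subtournament of $\Cy$; since $\C$ is by definition the class of all finite subtournaments of $\Cy$, this is exactly what is required. Write $\m{A} = (A;<^{\m{A}},P_1^{\m{A}},P_2^{\m{A}})$ and enumerate $A$ as $a_1 <^{\m{A}} a_2 <^{\m{A}} \cdots <^{\m{A}} a_n$. In $\Ty$ there is an arc from $x$ to $y$ precisely when $0 < \arg(y/x) < \pi$, so the arc relation depends only on the angular positions of the points. The idea is therefore to spread the elements of $A$ over two tiny clusters of the circle, one near argument $0$ and one near argument $\pi$ (the half-turn being exactly the threshold in the definition of $\Ty$), placing $P_1^{\m{A}}$ in the first cluster and $P_2^{\m{A}}$ in the second, and using the global ordering $<^{\m{A}}$ to break ties inside each cluster.

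Concretely, I would fix a small $\epsilon>0$ and set $\delta_i = i\epsilon$, so that $0 < \delta_1 < \cdots < \delta_n$ are small and strictly increasing along $<^{\m{A}}$; then I send $a_i$ to the point of argument $\theta_i := \delta_i$ if $a_i \in P_1^{\m{A}}$ and $\theta_i := \pi + \delta_i$ if $a_i \in P_2^{\m{A}}$. Taking $\epsilon$ a rational multiple of $\pi$ with $n\epsilon < \pi$, every $\theta_i$ is a rational multiple of $\pi$, so every image point has rational argument and hence is a vertex of $\Cy$, and the $n$ points are pairwise distinct and never antipodal. It then remains to check that this map is an isomorphism onto its image; since both sides are tournaments, it suffices to verify that each pair receives the correct orientation.

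The verification splits into the four cases obtained by fixing $a_i <^{\m{A}} a_j$ and distinguishing whether or not $a_i \sim^{\m{A}} a_j$. When $a_i \sim^{\m{A}} a_j$ the angular difference $\theta_j-\theta_i = \delta_j-\delta_i$ lies in $(0,\pi)$, so the images are joined in $\Ty$ by an arc oriented as the relation $a_i \la{\m{A}} a_j$ of $\pr(\m{A})$; when $a_i \nsim^{\m{A}} a_j$ the offset $\pi$ pushes $\theta_j-\theta_i$ (reduced mod $2\pi$) into $(\pi,2\pi)$, reversing the arc, again matching the definition of $\pr(\m{A})$. This is exactly where the choice of offset $\pi$ is essential: it is the unique value forcing the cross-class differences, for both relative positions of the two parts, to land on the far side of the threshold, and it is where one must take slight care to keep every difference strictly away from $0$ and $\pi$ (guaranteed by $0 < \delta_j-\delta_i < n\epsilon < \pi$). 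No step is genuinely hard; the only real work is this bookkeeping of the four cases, plus the observation that the angles can be kept rational multiples of $\pi$. Should one's orientation convention for $\la{\m{A}}$ introduce a global reversal, post-composing with complex conjugation $z \mapsto \bar z$, which maps $\Cy$ onto itself and reverses every arc of $\Ty$, repairs it, so membership in $\C$ is unaffected.

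As an alternative, more structural route, I would note that $\pr$ is monotone: an embedding $\m{A} \hookrightarrow \m{B}$ in $\p_2$ induces an embedding $\pr(\m{A}) \hookrightarrow \pr(\m{B})$ of tournaments, since the arc relation is defined pointwise from $<^{\m{A}}$ and the partition. Combined with the universality of $\Q_2$ for $\p_2$ and the identification $\pr(\Q_2) \cong \Cy$ (proved by the same angular placement, now of a dense set), this yields the lemma with no case analysis, at the cost of invoking those two external facts. I would lead with the direct embedding, as it is self-contained and establishes the membership $\pr(\m{A}) \in \C$ at once.
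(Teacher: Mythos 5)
Your first argument is essentially the paper's own proof: the paper also realizes $\pr(\m{A})$ on the circle by embedding $(\m{A},<^{\m{A}})$ into the open upper half-circle (ordered by the arc relation of $\Cy$) and then replacing each point of $P_2^{\m{A}}$ by its antipode, which is exactly your placement at arguments $\delta_i$ versus $\pi+\delta_i$, and your four-case check just makes explicit the paper's observation that antipodal reflection of one part preserves same-part arcs and reverses cross-part arcs. The only wrinkle --- one the paper shares, since its points $-a$ likewise fail to have rational argument in radians --- is that your image is literally a finite antipodal-free subtournament of $\Ty$ rather than of $\Cy$; this is harmless, as such a finite configuration can be perturbed onto rational arguments without any angular difference crossing the thresholds $0$ or $\pi$.
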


\begin{proof} We construct $\varphi(\m{A}) \subset \Cy$ isomorphic to $\pr(\m{A})$ as follows: denote by $Im^+$ the complex open upper half plane. The directed graph structure on $\Cy$ induces a linear ordering on $\Cy \cap Im^+$ if we set $x<y$ iff $x \la{\Cy} y$. As a linear order, $(\Cy \cap Im^+, <)$ is isomorphic to $\Q$, hence without loss of generality we may assume that the linear ordering $(\om{A})$ is a subset of $(\Cy \cap Im^+, <)$. Using the fact that in the complex plane, $(-a)$ is the symmetric of $a$ with respect to the origin, let $\varphi : \funct{\m{A}}{\Cy}$ be defined by: \begin{displaymath}
\varphi (a) = \left \{ \begin{array}{cl}
 a & \textrm{if $a \in P_1 ^{\m{A}}$,} \\
 -a & \textrm{if $a \in P_2 ^{\m{A}}$.}
 \end{array} \right.
\end{displaymath} 

Observe that if $a, a' \in \m{A}$ belong to the same $P_i ^{\m{A}}$, then $\varphi$ preserves the arc relation $\la{\Cy}$ between $a$ and $a'$ while it reverses it when $a$ and $a'$ do not belong to the same $P_i ^{\m{A}}$. This fact together with the construction scheme described previously for $\pr(\m{A})$ (paragraph preceding Lemma \ref{lem:ext}) imply that the tournaments $\varphi(\m{A})$ and $\pr(\m{A})$ are isomorphic. \end{proof}

The procedure applied in Lemma \ref{lem:ext} (refered to as \emph{projection procedure} in the sequel) is illustrated in a simple case in Figure \ref{fig:projection}. 

\begin{figure}[h]
\begin{center}
\hskip-10pt\includegraphics{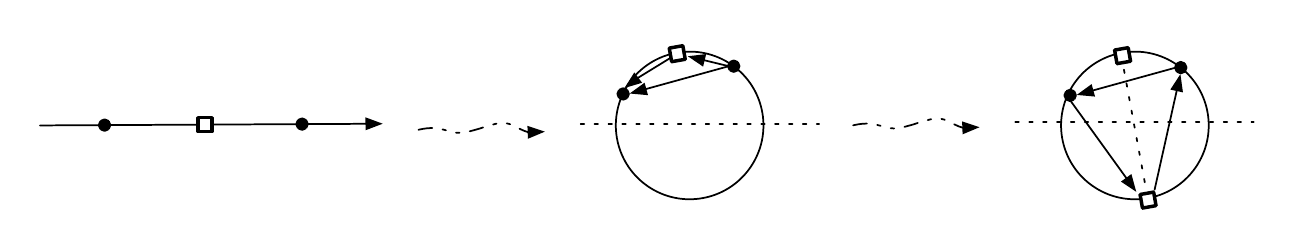}%
\drawat{-122mm}{14.64mm}{$a_1$}%
\drawat{-111.65mm}{14.64mm}{$a_2$}%
\drawat{-99.08mm}{14.64mm}{$a_3$}%
\drawat{-123.88mm}{4.68mm}{$\m{A}$}%
\drawat{-73.2mm}{4.68mm}{$\m{S}(2)$}%
\drawat{-28.75mm}{4.68mm}{$\m{S}(2)$}%
\drawat{-71.67mm}{17mm}{$a_1$}%
\drawat{-63.9mm}{22mm}{$a_2$}%
\drawat{-57.09mm}{20mm}{$a_3$}%
\drawat{-29.75mm}{17mm}{$\varphi(a_1)$}%
\drawat{-20.27mm}{2mm}{$\varphi(a_2)$}%
\drawat{-11.23mm}{19.75mm}{$\varphi(a_3)$}%
\drawat{-51.39mm}{15.75mm}{$Im^+$}%
\end{center}
\caption{The projection procedure --- construction of $\varphi(A)$}
\label{fig:projection}
\end{figure}

\begin{lemma}
\label{lem:pi2}
Let $\m{X} \subset \Cy$. Then $\m{X}$ has exactly $2|\m{X}|/|\Aut(\m{X})|$ nonisomorphic extensions.
\end{lemma}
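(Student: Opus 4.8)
The plan is to count first the \emph{labeled} extensions of $\m{X}$ (those $\m{A}\in\p_2$ whose underlying set is that of $\m{X}$ and with $\pr(\m{A})=\m{X}$ literally), and then to pass to isomorphism classes via a group action. Write $n=|\m{X}|$ and let $E(\m{X})$ be the set of labeled extensions. The first observation is that a labeled extension is determined by its partition alone: given a $2$-coloring $c$ of the vertex set (a choice of which $P_i$ each vertex lies in), the defining equivalence of $\pr$, read as a constraint on the order, forces for $a\neq b$
\[ a <_c b \iff \big( c(a)=c(b) \ \wedge\ a \la{\m{X}} b \big) \ \vee\ \big( c(a)\neq c(b) \ \wedge\ b \la{\m{X}} a \big). \]
Thus $c$ determines at most one candidate order $<_c$, and $E(\m{X})$ is in bijection with the set of colorings $c$ for which $<_c$ is a \emph{linear} order. (The relation $<_c$ is automatically total and antisymmetric since $\m{X}$ is a tournament; only transitivity is at stake.)

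Second, I would make this condition geometric using a fixed representation $\m{X}\subset\Cy$ on the circle $\T$. For a coloring $c$ set $\psi_c(x)=x$ if $c(x)=1$ and $\psi_c(x)=-x$ if $c(x)=2$. A direct computation, identical in spirit to the one in Lemma~\ref{lem:ext}, shows $a <_c b \iff \psi_c(a)\la{\Cy}\psi_c(b)$; that is, $<_c$ is exactly the tournament that $\Cy$ induces on the point set $\psi_c(X)$, transported back along $\psi_c$. Since the tournament induced by $\Cy$ on a finite set of points of $\T$ (no two antipodal) is a linear order precisely when those points lie in a common \emph{open} semicircle, the valid colorings are exactly those $c$ for which $\psi_c(X)$ is contained in an open semicircle. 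Note this works with one fixed embedding and needs no uniqueness/homogeneity statement about how $\m{X}$ sits inside $\Cy$.

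Third, I would count these. The $2n$ points $\{\pm x : x\in X\}$ are pairwise distinct (no two vertices of $\Cy$ are antipodal, else they would carry no arc), so they split $\T$ into $2n$ arcs. For $\beta$ in any such arc the open semicircle $(\beta,\beta+\pi)$ contains exactly one member of each antipodal pair $\{x,-x\}$, hence selects exactly one valid coloring $c_\beta$; conversely, any valid coloring $c$ is $c_\beta$ for $\beta$ chosen with $\psi_c(X)\subset(\beta,\beta+\pi)$. The crux is that distinct arcs give distinct colorings: if $S=\psi_c(X)$ lies in the shortest containing arc $[a,b]$ (with $a,b\in S$ and length $<\pi$), then the set of $\beta$ with $c_\beta=c$ equals $\{\beta : S\subseteq(\beta,\beta+\pi)\}=(b-\pi,a)$, a single arc containing no event point. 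Hence the $2n$ arcs biject with the valid colorings and $|E(\m{X})|=2n$. This verification — that the rotating diameter returns to its initial selection only after a full turn — is the step I expect to be the main obstacle, since it is where the finite combinatorics of the circle must be pinned down carefully.

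Finally, I would pass to isomorphism classes. The group $\Aut(\m{X})$ acts on $E(\m{X})$ by pushing forward the order and the partition; because $\pr$ is built from order-plus-partition, a $\p_2$-isomorphism between two labeled extensions is exactly a tournament automorphism of $\m{X}$ carrying one to the other, so the nonisomorphic extensions are precisely the $\Aut(\m{X})$-orbits on $E(\m{X})$. This action is free: any $g$ fixing an extension $\m{A}$ is in particular an automorphism of the finite linear order $(X,<^{\m{A}})$, hence the identity. Therefore every orbit has size $|\Aut(\m{X})|$, and the number of nonisomorphic extensions is $|E(\m{X})|/|\Aut(\m{X})|=2n/|\Aut(\m{X})|=2|\m{X}|/|\Aut(\m{X})|$, as claimed.
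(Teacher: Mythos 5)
Your proof is correct, and it runs on the same geometric engine as the paper's --- a diameter rotating through the origin, whose $2|\m{X}|$ essentially distinct positions (the $2|\m{X}|$ arcs cut out by the points $\pm x$, $x \in X$) parametrize the extensions --- but the bookkeeping is organized differently. The paper counts ``top down'': it defines the extension procedure from a pair $(L,H)$, argues via the projection procedure that every extension arises this way, observes that a line induces one of $|\m{X}|$ linear orderings of $\m{X}$, identifies two orderings exactly when the corresponding enumerations differ by an automorphism (giving $|\m{X}|/|\Aut(\m{X})|$ orderings up to isomorphism), and multiplies by $2$ for the choice of half-plane. You instead count \emph{labeled} extensions on the fixed vertex set: the order is forced by the partition, the admissible partitions are exactly the colorings $c$ with $\psi_c(X)$ in an open semicircle, there are exactly $2|\m{X}|$ of these, and only at the end do you divide by $|\Aut(\m{X})|$ via a free group action. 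Your route buys a mechanical final step (freeness is immediate because a finite linear order is rigid, and it exhibits directly that $|\Aut(\m{X})|$ divides $2|\m{X}|$), and it sidesteps the paper's separate argument that every abstract extension is produced by the extension procedure, since you work with the labeled structures from the start. The cost sits exactly where you flagged it: the injectivity of the map from arcs to colorings, which the paper disposes of in one sentence (``two lines induce the same ordering when their half-planes contain the same vertices''), you must verify by hand; your shortest-containing-arc computation does this correctly, since $(b-\pi,a)$ is indeed a single gap between consecutive event points. I see no gaps.
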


\begin{proof}
We first show that the projection procedure to obtain $\pr(\m{A})$ from $\m{A}$ can be reversed to an \emph{extension procedure} in order to construct extensions of $\m{X}$: Consider a line $L$ through the origin avoiding all the vertices of $\m{X}$. Choose one of the open half planes with boundary $L$, call it $H$. Then, set: \[ P_1 ^{\m{A}} = X \cap H, \ \ P_2 ^{\m{A}} = \{ -a : a \in X \smallsetminus H \}\enspace .\] 

That is, $P_2 ^{\m{A}} $ is the set obtained from $X \smallsetminus H $ by symmetry with respect to the origin. As previously, the arc relation on $\Cy$ induces a linear ordering on $A:= P_1 ^{\m{A}} \cup P_2 ^{\m{A}}$, call it $<^{\m{A}}$. Then the structure $\m{A}:=(A; P_1 ^{\m{A}}, P_2 ^{\m{A}}, <^{\m{A}})$ is in $\p _2$ and is an extension of $\m{X}$. A simple application of the extension procedure is illustrated in Figure \ref{fig:extension}. 

\begin{figure}[h]
\begin{center}
\hskip-10pt\includegraphics{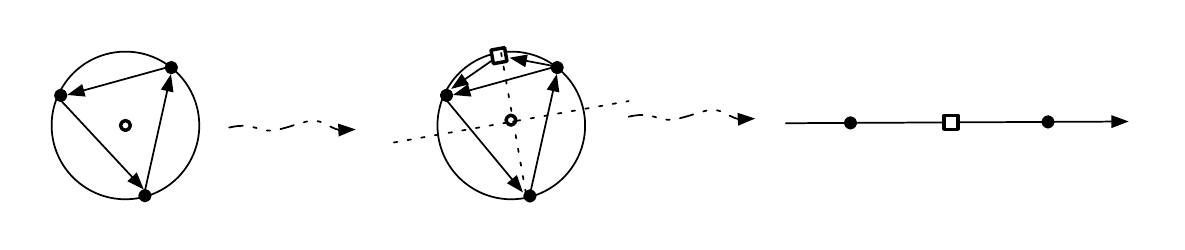}%
\drawat{-116.71mm}{16.52mm}{$x$}%
\drawat{-105.7mm}{3mm}{$y$}%
\drawat{-103.13mm}{20mm}{$z$}%
\drawat{-77.88mm}{16.52mm}{$x$}%
\drawat{-67.5mm}{3mm}{$y$}%
\drawat{-72.54mm}{22mm}{$-y$}%
\drawat{-63.2mm}{19.5mm}{$z$}%
\drawat{-35.3mm}{15.8mm}{$x$}%
\drawat{-27.58mm}{15.8mm}{$-y$}%
\drawat{-15.64mm}{15.8mm}{$z$}%
\drawat{-59.27mm}{18mm}{$H$}%
\drawat{-55.23mm}{15mm}{$L$}%
\end{center}
\caption{The extension procedure}
\label{fig:extension}
\end{figure}

Note the following essential fact: If $\m{A}$ is an extension of $\m{X}$ then applying the projection procedure to $\m{A}$ produces a copy $\mc{X}$ of $\m{X}$ included in $\Cy$, and applying the extension procedure to this same $\mc{X}$ where $L$ is the real axis and $H$ is the open upper half plane produces $\m{A}$ itself. It follows that every extension of $\m{X}$ can be obtained by applying the extension procedure to $\m{X}$. 

Hence, to count the number of non isomorphic extensions of $\m{X}$ in $\p _2$, we need to know when different choices of $L, H$ provide non isomorphic extensions. Observe first that the choice of $L$ determines a linear ordering on $\m{X}$ as follows: Choose any of the two half planes with boundary $L$. Using symmetry with respect to the complex origin if necessary, bring all the points of $\m{X}$ inside this half plane, where the arc relation on $\Cy$ induces a linear ordering. Then, simply pull this linear ordering back to $\m{X}$. Note that the linear ordering we obtain on $\m{X}$ does not depend on the half plane we chose to construct it. Observe that if two lines $L, L'$ induce linear orderings $<, <'$ such that $(\m{X}, <)$ and $(\m{X}, <')$ are non isomorphic (when seen as ordered tournaments), then any choice of $H, H'$ leads to non isomorphic extensions of $\m{X}$ in $\p_2$. Since for each line $L$ there are two choices for $H$, it follows that the number of non isomorphic extensions of $\m{X}$ in $\p_2$ is twice the number of structures of the form $(\m{X}, <)$ where $<$ comes from a line. 

To compute this number, observe that two lines $L, L'$ induce the same linear ordering on $\m{X}$ when their half planes contain the same vertices of $\m{X}$. Therefore, there are $|\m{X}|$ such orderings. Next, consider $<$ and $<'$. They enumerate $\m{X}$ increasingly as $\{ x_1,\ldots , x_{|\m{X}|}\}$ and $\{ x'_1,\ldots , x'_{|\m{X}|}\}$ respectively, and $(\m{X}, <)$ and $(\m{X}, <')$ are isomorphic exactly when the map $x_n \mapsto x'_n$ is an automorphism of $\m{X}$. Therefore, there are essentially $|\m{X}|/|\Aut(\m{X})|$ different ways to order $\m{X}$ via a line. The result of Lemma \ref{lem:pi2} follows. \end{proof}

\textbf{Remark:} Observe that since the number $|\m{X}|/|\Aut(\m{X})|$ represents the number of different ways to order $\m{X}$ via a line, it is an integer. Therefore, $|\Aut(\m{X})|$ divides $|\m{X}|$. 

\section{Ramsey degrees in $\C$}

\label{section:RdC} 

For $\m{X} \in \C$, we write $t (\m{X})$ for the number $2|\m{X}|/|\Aut(\m{X})|$. The purpose of this section is to prove Theorem \ref{thm:RdC}, that is: Every $\m{X} \in \C$ has a finite Ramsey degree $t_{\C}(\m{X})$ in $\C$ and $ t_{\C} (\m{X}) = t (\m{X})$. Throughout this section, $\m{X} \in \C$ is fixed. We first show that $ t_{\C} (\m{X}) \leq t (\m{X})$ and next that $t(\m{X}) \leq t_{\C}(\m{X})$.

\subsection{Upper bound for $ t_{\C} (\m{X})$: $t_{\C}(\m{X}) \leq t(\m{X})$}

We need to prove that for every strictly positive $k \in \N$, every $\m{Y} \in \C$, there is $\m{Z} \in \C$ such that \[ \m{Z} \arrows{(\m{Y})}{\m{X}}{k,t(\m{X})}\enspace .\] 

This is done thanks to the following partition property for $\p _2$: 

\begin{thm}[Kechris-Pestov-Todorcevic, \cite{KPT}]
\label{thm:RPp2}
Let $n \in \N$, $\m{A}, \m{B} \in \p _n$ and $k$ a positive natural. Then there is $\m{C} \in \p _n$ such that \[ \m{C} \arrows{(\m{B})}{\m{A}}{k}\enspace .\]
\end{thm}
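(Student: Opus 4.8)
The plan is to recognize the relation $\m{C} \arrows{(\m{B})}{\m{A}}{k}$ as the assertion that the class $\p _n$ has the Ramsey property, and to derive it from the structural Ramsey theory of finite ordered relational structures. First I would unwind the definitions: an element of $\p _n$ is a finite set carrying a linear ordering together with a partition of its points into the $n$ parts $P_1 ^{\m{A}},\ldots,P_n ^{\m{A}}$, and a copy of $\m{A}$ inside $\m{C}$ is an order-preserving injection that also preserves part-membership. Thus a coloring $\funct{\binom{\m{C}}{\m{A}}}{[k]}$ is a $k$-coloring of the order-and-part-preserving copies of the fixed ``colored chain'' $\m{A}$, and we seek a single copy $\widetilde{\m{B}} \in \binom{\m{C}}{\m{B}}$ all of whose sub-copies of $\m{A}$ receive one and the same color.

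The structures in $\p _n$ are precisely the finite models, in the relational language $\{ <, P_1,\ldots,P_n \}$, of the universal theory asserting that $<$ is a linear ordering and that $P_1,\ldots,P_n$ partition the universe. Since this theory is universal, the class is closed under substructures; moreover it is a rigid amalgamation class, whose Fra\"iss\'e limit is the homogeneous structure $\Q _n$. I would therefore appeal to the Ne\v{s}et\v{r}il--R\"odl theorem, which guarantees the Ramsey property for the class of all finite linearly ordered structures in a finite relational language when the linear order is adjoined freely. The only point requiring attention is that our unary predicates are constrained to form a partition rather than being arbitrary; because this constraint is preserved under substructures and under the amalgams produced by the construction, the general argument specializes to $\p _n$ without difficulty.

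For a self-contained argument I would instead proceed by induction, using the ordinary finite Ramsey theorem as the engine. The base case $n=1$ is immediate: a structure in $\p _1$ is just a finite linear order, a copy of $\m{A}$ is an $|\m{A}|$-element subset, and choosing $|\m{C}|$ to be the Ramsey number for $(|\m{B}|,|\m{A}|,k)$ yields the conclusion verbatim. For the general step I would build $\m{C}$ as a linearly ordered concatenation of many disjoint copies of $\m{B}$, the ``blocks'', so that every copy of $\m{A}$ in $\m{C}$ distributes its points across some tuple of blocks according to a definite pattern; I would then apply an iterated (product) form of Ramsey's theorem to stabilize the coloring simultaneously over all such split-patterns, and finally select blocks so that, within the chosen copy of $\m{B}$, all patterns collapse to a single color.

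The main obstacle is exactly this interaction between the order and the part-structure: a single copy of $\m{A}$ can sit inside $\m{C}$ in many combinatorially distinct ways---entirely within one block, or spread across several blocks with various residual colored suborders---and one must control the coloring on all of these configurations at once while keeping a full copy of $\m{B}$ intact. Managing this simultaneous bookkeeping is precisely what the partite construction of Ne\v{s}et\v{r}il--R\"odl is designed to handle, and it is the step where the naive block-by-block reasoning must be replaced by a careful iteration of amalgamations; everything else becomes routine once the correct witness $\m{C}$ has been assembled.
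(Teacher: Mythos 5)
Your proposal is correct in substance, but it takes a genuinely different route from the paper, which does not prove this statement at all: it simply cites Theorem~8.4 of Kechris--Pestov--Todorcevic \cite{KPT}. Your first route, via the Ne\v{s}et\v{r}il--R\"odl theorem, does work, but the point you wave at (``the constraint is preserved under the amalgams produced by the construction'') is better handled by phrasing $\p_n$ as a class of ordered structures omitting forbidden configurations: a point lying in no part, or in two parts, is a one-element (hence trivially irreducible) structure, and the ordered Ne\v{s}et\v{r}il--R\"odl theorem for classes of the form $\mathrm{Forb}(\mathcal{F})$ with $\mathcal{F}$ irreducible and the order adjoined freely then applies verbatim. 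Your second, ``self-contained'' route has a real gap as stated: if the blocks of $\m{C}$ are copies of $\m{B}$, then the copy of $\m{B}$ you ultimately select lives inside a single block, where the coloring was never constrained, so stabilizing the cross-block split-patterns buys you nothing; you correctly sense this and retreat to the partite construction, but that retreat is unnecessary.

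Indeed, the theorem admits a one-step reduction to the classical Ramsey theorem, which both repairs your block argument and avoids Ne\v{s}et\v{r}il--R\"odl entirely. An element of $\p_n$ with $p$ points is coded by the word in $[n]^p$ recording the parts of its points in increasing order; let $a$ and $b$ be the words of $\m{A}$ and $\m{B}$, of lengths $p$ and $q$. Take $\m{C}$ to consist of $N$ consecutive ``rainbow'' blocks, each containing exactly one point of each part in a fixed order, where $N$ is large enough that $N \arrows{(q)}{p}{k}$ in the classical sense. Each $p$-subset $I$ of blocks carries a canonical copy $\m{A}_I$ of $\m{A}$ (from the $j$-th block of $I$ take the point lying in part $a_j$), so a coloring $\chi$ of $\binom{\m{C}}{\m{A}}$ induces a coloring of $p$-subsets of $[N]$; a homogeneous $q$-set of blocks yields a copy of $\m{B}$ with one point per block, and every copy of $\m{A}$ inside it is canonical, hence monochromatic. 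The moral difference from your setup is that the target copy of $\m{B}$ must be spread across blocks, one point each, rather than sitting inside a single block-copy of $\m{B}$.
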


\begin{proof}
cf \cite{KPT}, Theorem 8.4, p.158-159. 
\end{proof}

In order to prove that $\m{X}$ has a finite Ramsey degree $t_{\C}(\m{X})$ and that $t_{\C}(\m{X}) \leq t(\m{X})$, we apply Theorem \ref{thm:RPp2} $t (\m{X})$ times as follows. For the sake of clarity, we only consider the particular case where $t (\m{X}) = 2$ but it should be clear at the end of the argument how to generalize to any other value. According to Lemma \ref{lem:pi2}, $t (\m{X})$ is equal to the number of nonisomorphic extensions of $\m{X}$ in $\p _2$. Let $\m{A}_0, \m{A}_1$ denote those extensions. Let also $\m{B}_0 \in \p _2$ be such that $\pr (\m{B}_0) \cong \m{Y}$. Using Theorem \ref{thm:RPp2}, construct $\m{B}_1$ so that \[ \m{B}_{1} \arrows{(\m{B}_0)}{\m{A}_0}{k}\enspace .\] Next, construct $\m{B}_2$ so that \[ \m{B}_{2} \arrows{(\m{B}_1)}{\m{A}_1}{k}\enspace .\]

We claim that $\m{Z} := \pr (\m{B}_{2})$ is as required. Let $\chi : \funct{\binom{\m{Z}}{\m{X}}}{[k]}$. Then $\chi$ induces a map from $\binom{\m{B}_{2}}{\m{A}_1}$ to $[k]$. By construction of $\m{B}_2$, we can find $\mc{B}_1 \in \binom{\m{B}_2}{\m{B}_1}$ such that $\chi$ is constant on $\binom{\mc{B}_1}{\m{A}_1}$. Then, working in $\mc{B}_1$, $\chi$ induces a map from $\binom{\mc{B}_1}{\m{A}_0}$ to $[k]$. By construction of $\m{B}_1$, we can find $\mc{B}_0 \in \binom{\mc{B}_1}{\m{B}_0}$ such that $\chi$ is constant on $\binom{\mc{B}_0}{\m{A}_0}$. Note that since $\mc{B}_0 \subset \mc{B}_1$, $\chi$ is also constant on $\binom{\mc{B}_0}{\m{A}_1}$. In $\m{Z}$, the substructure $\mc{Y}$ supported by $\mc{B}_{0}$ is then isomorphic to $\m{Y}$ and we have \[\binom{\mc{Y}}{\m{X}} = \binom{\mc{Y}}{\m{A}_0} \cup \binom{\mc{Y}}{\m{A}_1} \enspace .\] 

Therefore, the map $\chi$ takes no more than $2$ values (in the general case, $t(\m{X})$ values) on $\binom{\mc{Y}}{\m{X}}$, as required. Thus, $\m{X}$ has a Ramsey degree $t_{\C}(\m{X})$ in $\C$ and \[ t_{\C}(\m{X}) \leq t(\m{X})\enspace .\] 

\subsection{Lower bound for $t_{\C}(\m{X})$: $t(\m{X}) \leq t_{\C}(\m{X})$}

The main ingredient is the following lemma: 

\begin{lemma}
\label{lem:extprop}
There exists $\m{Y} \in \C$ such that every extension of $\m{X}$ embeds into every extension of $\m{Y}$.
\end{lemma}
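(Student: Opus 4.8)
The plan is to find a single tournament $\m{Y} \in \C$ that simultaneously contains every possible extension pattern of $\m{X}$, so that no matter which extension of $\m{Y}$ we are handed, it already carries a copy of each extension of $\m{X}$. The natural candidate is obtained by amalgamating all extensions of $\m{X}$ into one structure in $\p_2$ and then projecting back down to $\C$. Concretely, let $\m{A}_0, \ldots, \m{A}_{t(\m{X})-1}$ enumerate the finitely many nonisomorphic extensions of $\m{X}$ in $\p_2$ (there are $2|\m{X}|/|\Aut(\m{X})|$ of them by Lemma \ref{lem:pi2}). First I would build a single structure $\m{B} \in \p_2$ into which every $\m{A}_i$ embeds; since $\p_2$ is the age of the homogeneous structure $\Q_2$, such a $\m{B}$ exists (one may simply take a large enough finite substructure of $\Q_2$ containing a copy of each $\m{A}_i$). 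Then set $\m{Y} := \pr(\m{B})$, which lies in $\C$ by Lemma \ref{lem:ext}.

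The key point to verify is that \emph{every} extension of $\m{Y}$ contains a copy of \emph{every} extension of $\m{X}$. Here I would use the essential reversibility observed in the proof of Lemma \ref{lem:pi2}: every extension of a tournament $\m{Z} \subset \Cy$ arises by applying the extension procedure to a copy of $\m{Z}$ sitting in $\Cy$, i.e.\ by choosing a line $L$ through the origin and a half plane $H$, splitting the vertices accordingly, and reflecting those outside $H$. So let $\m{A}$ be an arbitrary extension of $\m{Y}$; realize it via the extension procedure applied to a copy $\mc{Y} \subset \Cy$ with some line $L$ and half plane $H$. Because $\mc{Y} \cong \m{Y} = \pr(\m{B})$, the structure $\m{B}$ itself is (isomorphic to) the extension of $\mc{Y}$ obtained from the real axis and the upper half plane; more usefully, the same choice of $L, H$ applied to any subset of $\mc{Y}$ restricts $\m{A}$ to the corresponding extension of that subset.

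The heart of the argument is therefore to show that, given $L$ and $H$ producing $\m{A}$ from $\mc{Y}$, each extension $\m{A}_i$ of $\m{X}$ appears as the restriction of $\m{A}$ to a suitable copy of $\m{X}$ inside $\mc{Y}$. Since $\m{A}_i$ is itself obtained from some copy of $\m{X}$ by \emph{some} line--half-plane choice, and since $\m{B} = \pr(\m{A})$ contains a copy of $\m{A}_i$, I would transfer that copy through the projection: the copy of $\m{A}_i$ inside $\m{B}$ projects to a copy of $\m{X} = \pr(\m{A}_i)$ inside $\mc{Y}$, and by the compatibility of projection with restriction (the reversibility fact), the restriction of $\m{A}$ to this copy is exactly the extension $\m{A}_i$. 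In other words, the fixed data $(L,H)$ on $\mc{Y}$ induces on each embedded copy of $\m{X}$ precisely the extension dictated by how that copy sits relative to $L$, and $\m{B}$ was chosen large enough that all $t(\m{X})$ possible relative positions occur.

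The step I expect to be the main obstacle is this last compatibility claim: making precise that the extension procedure applied to $\mc{Y}$ with $(L,H)$ \emph{agrees} on every sub-copy of $\m{X}$ with the extension procedure applied to that sub-copy with the same $(L,H)$, and that every one of the $t(\m{X})$ extension types of $\m{X}$ is genuinely realized this way. The subtlety is that the extension type of a sub-copy depends only on how the line $L$ separates its vertices and on the chosen half plane, exactly the data analyzed in the proof of Lemma \ref{lem:pi2}; so the real work is to check that $\m{B}$, containing copies of all $\m{A}_i$, forces all these separation patterns to be present simultaneously, and that projection and restriction commute on the nose. Once that bookkeeping is done, it follows that every extension of $\m{Y}$ embeds every extension of $\m{X}$, which is the assertion of the lemma.
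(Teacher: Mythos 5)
There is a genuine gap here, and it is fatal to the construction rather than a bookkeeping issue you could patch afterwards. Taking $\m{Y} = \pr(\m{B})$ where $\m{B} \in \p_2$ contains a copy of every extension $\m{A}_i$ of $\m{X}$ guarantees only that the \emph{one} extension $\m{B}$ of $\m{Y}$ contains all the $\m{A}_i$. The lemma quantifies over \emph{every} extension of $\m{Y}$, and an arbitrary extension $\m{A}$ of $\m{Y}$ arises from a different line--half-plane pair $(L,H)$: the restriction of $\m{A}$ to the copy of $\m{X}$ underlying your transferred copy of $\m{A}_i$ is the extension dictated by $(L,H)$, not by the pair that produced $\m{B}$, so there is no reason for it to be $\m{A}_i$. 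Your own ``compatibility of projection with restriction'' step conflates these two pairs. Concretely the construction already fails when $\m{X}$ is a single vertex: its two extensions place the vertex in $P_1$ or in $P_2$; taking $\m{B}$ to be a two-point structure with one point in each part gives for $\m{Y}=\pr(\m{B})$ a single arc, and $\m{Y}$ has an extension with both vertices in $P_1$, into which the extension of $\m{X}$ placing its vertex in $P_2$ does not embed. Enlarging $\m{B}$ does not help, because the defect lies in the \emph{other} extensions of $\pr(\m{B})$, which you do not control.

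The paper attacks exactly the quantifier your construction cannot handle: it takes $\m{Y} = \m{C}_n$, the subtournament of $\Cy$ on the $(2n+1)$st roots of unity, whose $2n+1$ rotational automorphisms force $t(\m{C}_n) = 2(2n+1)/(2n+1) = 2$, so that $\m{C}_n$ has only two extensions $\m{D}_n, \m{E}_n$, part-swaps of one another with $P_1$ and $P_2$ alternating along the order. For $n$ large, $\m{X}$ embeds into $\m{C}_n$; every extension of $\m{X}$ is realized by some $(L,H)$, and that same $(L,H)$ extends $\m{C}_n$ to a copy of $\m{D}_n$ or $\m{E}_n$ containing it (rotating $L$ past a vertex of $\m{C}_n \smallsetminus \m{X}$ switches between the two without disturbing the induced extension of $\m{X}$). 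In other words, the right move is not to amalgamate all extensions of $\m{X}$ into a large $\m{B}$, but to choose $\m{Y}$ so symmetric that it has essentially no freedom in how it can be extended; that is the idea missing from your argument.
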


\begin{proof}
Let $\m{C}_n$ denote the subtournament of $\Cy$ whose set of vertices is given by $\{ e^{\frac{2ik\pi}{2n+1}}: k =0,\ldots , 2n\}$. Observe that up to an interchange of the parts, all the extensions of $\m{C}_n$ in $\p _2$ are isomorphic. Essentially, this is so because there is only one way to order $\m{C}_n$ via a line through the origin as in Lemma \ref{lem:pi2}. Another way to see it is to notice that $\m{C}_n$ admits exactly $2n+1$ automorphisms: every rotation whose angle is a multiple of $(2i\pi/2n+1)$ provides an automorphism. Furthermore, we saw with the Remark at the end of section \ref{section:Extensions} that the number of automorphisms divides the cardinality of the structure. Thus, there cannot be more than $2n+1$ automorphisms, which means in the present case that there are exactly $2n+1$ automorphisms. Therefore, $\m{C}_n$ has $2|\m{C}_n|/|\Aut(\m{C}_n)|=2(2n+1)/(2n+1)=2$ extensions in $\p_2$, namely \[\m{D}_n = \bigl( \left[2n+1 \right], <, \left[ 2n+1 \right]\cap 2\Z, \left[ 2n+1 \right] \cap (2\Z+1) \bigr), \] \[ \m{E}_n = \bigl( \left[ 2n+1 \right], <, \left[ 2n+1 \right] \cap (2\Z+1), \left[ 2n+1 \right] \cap 2\Z \bigr)\enspace .\] 

Note that if $n$ is large enough, then $\m{X}$ embeds into $\m{C}_n$. Note also that seeing $\m{X}$ as a subtournament of $\m{C}_n$, the extension procedure applied to $\m{X}$ with any line $L$ and plane $H$ also induces an extension of $\m{C}_n$. It follows that \emph{any} extension of $\m{X}$ embeds in $\m{D}_n$ and $\m{E}_n$, and we can take $\m{Y}=\m{C}_n$. \end{proof}

Here is how Lemma \ref{lem:extprop} leads to the required inequality: Let $\m{Z} \in \C$. We show that there is a map $\chi$ on $\binom{\m{Z}}{\m{X}}$ using $t (\m{X})$ values and taking $t(\m{X})$ values on the set $\binom{\mc{Y}}{\m{X}}$ whenever $\mc{Y} \in \binom{\m{Z}}{\m{Y}}$. Let $\m{C}$ be an extension of $\m{Z}$ in $\p _2$. Then given a copy $\mc{X}$ of $\m{X}$ in $\m{Z}$, the substructure of $\m{C}$ supported by $\mc{X}$ is an extension of $\m{X}$ in $\p _2$ and is isomorphic to a unique element $\m{A}_j$ of the family $(\m{A}_i)_{i<t (\m{X})}$. Let $\chi (\mc{X}) = j$. Then the map $\chi$ is as required.

\subsection{Comments about $t_{\C}(\m{X})$} 

The effective computation of $t_{\C}(\m{X})$ (or equivalently of $|\Aut(\m{X})|$) in the general case does not seem to be easy. It can be carried out in the most elementary cases, see Figure \ref{fig:elemRd}. 

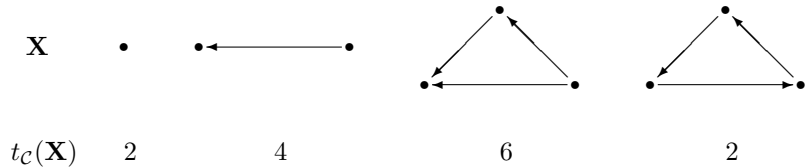
\begin{figure}[h]
\setlength{\unitlength}{1mm}
\begin{picture}(110,30)(0,0)

\put(2,19){$\m{X}$}
\put(0,5){$t_{\C}(\m{X})$}

\put(15,20){\circle*{1}}
\put(15,5){$2$}

\put(25,20){\circle*{1}}
\put(45,20){\circle*{1}}
\put(44,20){\vector(-1,0){18}}
\put(35,5){$4$}

\put(65,25){\circle*{1}}
\put(55,15){\circle*{1}}
\put(75,15){\circle*{1}}
\put(74,15){\vector(-1,0){18}}
\put(64,24){\vector(-1,-1){8}}
\put(74,16){\vector(-1,1){8}}
\put(65,5){$6$}

\put(95,25){\circle*{1}}
\put(85,15){\circle*{1}}
\put(105,15){\circle*{1}}
\put(86,15){\vector(1,0){18}}
\put(94,24){\vector(-1,-1){8}}
\put(104,16){\vector(-1,1){8}}
\put(95,5){$2$}

\end{picture}
\label{fig:elemRd}
\caption{Elementary values of $t_{\C}(\m{X})$}
\end{figure}

There are also a few particular elements of $\C$ for which it can be performed directly. For example, for the oriented graph corresponding to the linear order on $n$ points, the Ramsey degree in $\C$ is equal to $2n$ as there is only one automorphism. On the other hand, call $\m{C}_n$ the subtournament of $\Cy$ whose set of vertices is given by $\{ e^{\frac{2ik\pi}{2n+1}}: k =0,\ldots , 2n\}$. We saw in the proof of Lemma \ref{lem:extprop} that $\m{C}_n$ only has two non isomorphic extensions. It follows that the Ramsey degree of $\m{C}_n$ in $\C$ is equal to $2$. Finally, note that given any $n \in \N$, there are exactly $2^n$ nonisomorphic structures in $\p _2$ whose base set has exactly $n$ elements. Note also that any such structure is the extension of a unique $\m{X} \in \C$ such that $|\m{X}|=n$. It follows that \[ \sum _{|\m{X}|=n} t_{\C}(\m{X}) = 2^n\enspace .\] 

Using the expression of $t_{\C}(\m{X})$, it follows that \[ \sum _{\m{X} \in \C, |\m{X}|=n} \frac{n}{\Aut(\m{X})} = 2^{n-1}\enspace .\] %Observe also that $t_{\C}(\m{X})$ is always even as permutting the labels of the classes of any extension $\m{A}$ of $\m{X}$ still produces an extension of $\m{X}$ which is nonisomorphic to $\m{A}$. %Finally, note that $|\aLO(\m{X})|\leq |\m{X}|$. It follows that $t_{\C}(\m{X}) \leq 2|\m{X}|$, and using the previous equality, that \[ \frac{2^n}{2n} \leq |\{ \m{X} \in \C : |X|=n\}|\enspace .\]

\section{Big Ramsey degrees in $\C$}

\label{section:bigRdC}

The purpose of this section is to prove Theorem \ref{thm:bigRdC} under the assumption that Theorem \ref{thm:bigRdP} holds. Denoting by $T(\m{X})$ the number $t_{\C} (\m{X})\tan ^{(2|\m{X}|-1)}(0)$, we need to show that every $\m{X}$ has a finite big Ramsey degree $T_{\C}(\m{X})$ in $\C$ equal to $T(\m{X})$. Equivalently, we first need to prove that for every $k\in \N$, \[ \Cy \arrows{(\Cy)}{\m{X}}{k,T(\m{X})}\enspace .\] 

Then, when this is done, we need show that $T(\m{X})$ is the least number with that property. 

\subsection{Upper bound for $ T_{\C} (\m{X})$: $T_{\C}(\m{X}) \leq T(\m{X})$}

Recall given a structure $\m{A}=(A, <^{\m{A}}, P_1 ^{\m{A}}, P_2 ^{\m{A}})$ where $<^{\m{A}}$ is a linear ordering on $A$ and $(P_1 ^{\m{A}}, P_2 ^{\m{A}})$ is a partition of $A$ into two disjoint sets, the tournament $\pr (\m{A})$ is obtained by interpreting $<^{\m{A}}$ as a directed graph relation $\la{}$ ($x \la{} y$ iff $x <^{\m{A}} y$) and reversing all the arcs between the elements of $\m{A}$ which are in different parts $P_i ^{\m{A}}$. 

\begin{lemma}
\label{claimm:Q2}
$\Q _2$ is an extension of $\Cy$. 
\end{lemma}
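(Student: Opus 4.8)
The plan is to exhibit an explicit isomorphism between $\Q_2$ (or an isomorphic copy of it) and $\pr(\m{A})$ for a suitable $\m{A} \in \p_2$ built on the whole of $\Cy$, rather than on a finite structure. The key conceptual point is that the extension/projection machinery developed in Lemmas \ref{lem:ext} and \ref{lem:pi2} for finite structures applies equally well at the level of the countable homogeneous objects, because the defining geometric picture (linear order induced on a half-plane, reversal of arcs across the origin) makes sense verbatim for $\Cy$.

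First I would describe the candidate extension concretely. Take $L$ to be the real axis and $H$ the open upper half plane $Im^+$. Exactly as in the extension procedure of Lemma \ref{lem:pi2}, set $P_1 = \Cy \cap H$ and $P_2 = \{-a : a \in \Cy \smallsetminus H\}$, let $<$ be the linear ordering that the arc relation $\la{\Cy}$ induces on $A := P_1 \cup P_2 \subset \Cy \cap Im^+$, and form $\m{A} = (A; <, P_1, P_2) \in \p_2$. By the observation recorded right after the extension procedure, applying the projection procedure $\pr$ to this $\m{A}$ recovers $\Cy$ itself, so $\pr(\m{A}) = \Cy$, i.e. $\m{A}$ is genuinely an extension of $\Cy$.

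The remaining and main task is to identify this $\m{A}$ with $\Q_2$. For that I would invoke the characterization of $\Q_2$ as the unique countable homogeneous structure whose age is $\p_2$, recalled in the introduction; equivalently, $\Q_2 \cong (\Q, Q_1, Q_2, <)$ with $Q_1, Q_2$ dense. So it suffices to check three things about $\m{A}$: that $(A,<)$ is a countable dense linear order without endpoints, hence order-isomorphic to $\Q$; that each of $P_1, P_2$ is dense in $(A,<)$; and that $\m{A}$ is homogeneous, or at least that its age is all of $\p_2$ and it is universal, which by Fraïssé theory pins it down to $\Q_2$. Density of the order follows from the density of the rational arguments on the circle, and density of each part $P_i$ follows because rational points with argument in any prescribed sub-arc of the relevant half-circle are dense; homogeneity transfers from the known homogeneity of $\Cy$ through the fact that $\pr$ and the extension procedure are mutually inverse and respect isomorphisms.

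The hard part will be the homogeneity (or equivalently the age-plus-universality) verification: one must argue that every finite partial isomorphism of $\m{A}$ extends to an automorphism of $\m{A}$, and the cleanest route is to transport such a partial isomorphism through $\pr$ to a finite partial isomorphism of $\Cy$, extend it using homogeneity of $\Cy$, and transport the resulting automorphism back via the extension procedure with $L$ the real axis; the subtlety is that automorphisms of $\Cy$ need not fix the half-plane $H$, so one has to check that the induced map still respects the partition $(P_1,P_2)$ up to the allowed relabeling, which is exactly the content of the ``up to an interchange of the parts'' phenomenon seen for $\m{C}_n$ in Lemma \ref{lem:extprop}. Once this compatibility is handled, the identification $\m{A} \cong \Q_2$, and hence the statement that $\Q_2$ is an extension of $\Cy$, follows.
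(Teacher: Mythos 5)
Your overall strategy is the paper's: apply the extension procedure to all of $\Cy$ and recognize the output as $\Q_2$. One minor point first: the real axis does not avoid $\Cy$ --- the point $1$ has rational argument $0$ and lies on $L$ --- so you should either choose a line through the origin missing $\Cy$ entirely, or do as the paper does and note that the procedure exhibits $\Q_2$ as an extension of $\Cy \smallsetminus \{ 1\} \cong \Cy$.

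The substantive issue is in your last two paragraphs: the identification of $\m{A}$ with $\Q_2$ requires no homogeneity verification at all. The representation $\Q_2 = (\Q, Q_1, Q_2, <)$ with each $Q_i$ dense already encodes the relevant uniqueness: any countable dense linear order without endpoints partitioned into two dense pieces is isomorphic to it by a routine back-and-forth. So once you have checked that $(A,<)$ is such an order and that $P_1$ and $P_2$ are both dense in it --- both of which follow from the density of rational arguments on the circle, as you say --- you are done, and homogeneity of $\m{A}$ comes for free because $\Q_2$ is homogeneous. The route you single out as ``the hard part'' is not only unnecessary but would not work as described: an automorphism of the tournament $\Cy$ extending a given partial isomorphism need not induce, via the extension procedure, a map of $\m{A}$ preserving the linear order and the partition $(P_1,P_2)$ even up to interchanging the parts, since an automorphism that moves points across the line $L$ rearranges the partition in a way that is not a global swap. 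Drop that detour and close the argument with the back-and-forth.
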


\begin{proof}
Applying the extension procedure (described in the proof of Lemma \ref{lem:pi2}) to the tournament $\Cy $ where $L$ is any line through the origin avoiding $\Cy$ and $H$ any of the open half planes with boundary $L$, we get $\Q _2$. Therefore, $\Q _2$ is an extension of $\Cy \smallsetminus \{ 1\} \cong \Cy$. \end{proof}

Keeping the result of Lemma \ref{claimm:Q2} in mind, here is how we prove $\Cy \arrows{(\Cy)}{\m{X}}{k,T(\m{X})}$. Let $\chi : \funct{\binom{\Cy}{\m{X}}}{[k]}$. Let $(\m{A}_i)_{i<t_{\C}(\m{X})}$ enumerate the extensions of $\m{X}$. Then by Lemma \ref{claimm:Q2}, $\chi$ induces a map from $\binom{\Q_2}{\m{A}_0}$ to $[k]$. By Theorem \ref{thm:bigRdP}, we can find $Q_2^0 \in \binom{\Q_2}{\Q_2}$ such that $\chi$ takes no more than $\tan ^{(2|\m{A}_0|-1)}(0) = \tan ^{(2|\m{X}|-1)}(0)$ values on $\binom{Q_2^0}{\m{A}_0}$. Then, working in $Q_2^0$, $\chi$ induces a map from $\binom{Q_2^0}{\m{A}_1}$ to $[k]$. Again, applying Theorem \ref{thm:bigRdP}, we can find $Q_2^1 \in \binom{Q_2^0}{\Q_2}$ such that $\chi$ takes no more than $\tan ^{(2|\m{A}_1|-1)}(0) = \tan ^{(2|\m{X}|-1)}(0)$ values on $\binom{Q_2^1}{\m{A}_1}$. Note that since $Q_2^1 \subset Q_2^0$, $\chi$ takes no more than $\Delta_{|\m{X}|}$ many values on $\binom{Q_2^1}{\m{A}_0}$. Repeating this procedure $t_{\C}(\m{X})$ times, we end up with $Q_2^{t_{\C}(\m{X})-1}$ such that for every $i~<~t_{\C}(\m{X})$, $\chi$ takes no more than $\tan ^{(2|\m{X}|-1)}(0)$ values on $\binom{Q_2^{t_{\C}(\m{X})-1}}{\m{A}_i}$. Now, in $\Cy$, the substructure supported by $Q_2^{t_{\C}(\m{X})-1}$ is isomorphic to $\Cy$ and since we have \[\binom{Q_2^{t_{\C}(\m{X})-1}}{\m{X}} = \bigcup _{i<t_{\C}(\m{X})} \binom{Q_2^{t_{\C}(\m{X})-1}}{\m{A}_i},\] the map $\chi$ takes no more than $t_{\C} (\m{X})\tan ^{(2|\m{X}|-1)}(0) = T(\m{X})$ values on $\binom{Q_2^{t_{\C}(\m{X})-1}}{\m{X}}$, as required. Thus, $\m{X}$ has a big Ramsey degree $T_{\C}(\m{X})$ in $\C$ and it follows that $T_{\C}(\m{X}) \leq T(\m{X})$.

\subsection{Lower bound for $T_{\C}(\m{X})$: $T(\m{X}) \leq T_{\C}(\m{X})$}

We start with an analogue of Lemma \ref{lem:extprop}. 

\begin{lemma}

\label{claimm:ext}
Every extension of $\m{X}$ in $\p _2$ embeds into every extension of $\Cy$.
\end{lemma}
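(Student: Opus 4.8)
The plan is to reduce the statement to the key observation established in Lemma \ref{lem:extprop}, namely that there is a finite $\m{Y} \in \C$ (concretely $\m{Y} = \m{C}_n$ for $n$ large) such that every extension of $\m{X}$ embeds into every extension of $\m{Y}$. So first I would choose $n$ large enough that $\m{X}$ embeds into $\m{C}_n \subset \Cy$, and recall from the proof of Lemma \ref{lem:extprop} that $\m{C}_n$ has exactly the two extensions $\m{D}_n$ and $\m{E}_n$ in $\p_2$, into \emph{both} of which every extension of $\m{X}$ embeds.

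Next I would take an arbitrary extension $\m{C}$ of $\Cy$ in $\p_2$; by definition this means $\pr(\m{C}) \cong \Cy$. The heart of the matter is then to locate a copy of $\m{C}_n$ inside $\Cy$ whose induced substructure in $\m{C}$ is a prescribed extension of $\m{C}_n$. Since $\pr(\m{C}) \cong \Cy$ and $\Cy$ is homogeneous with $\m{C}_n \subset \Cy$, we know $\m{C}_n$ embeds into $\pr(\m{C})$; pulling this embedding back through $\pr$, the induced substructure of $\m{C}$ on the corresponding vertices is an extension of $\m{C}_n$, hence (up to isomorphism) is either $\m{D}_n$ or $\m{E}_n$. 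In either case, because every extension of $\m{X}$ embeds into both $\m{D}_n$ and $\m{E}_n$, we conclude that every extension of $\m{X}$ embeds into the substructure of $\m{C}$ carried by this copy of $\m{C}_n$, and therefore into $\m{C}$ itself. This is precisely the assertion of the lemma.

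The main point to get right — and the step I expect to require the most care — is the compatibility between the projection map $\pr$ and the notion of embedding: I must verify that when a copy of $\m{C}_n$ sits inside $\Cy \cong \pr(\m{C})$, the substructure that $\m{C}$ induces on the supporting vertex set is genuinely an extension of $\m{C}_n$ (that is, $\pr$ applied to it recovers $\m{C}_n$). This follows from the essential fact recorded in the proof of Lemma \ref{lem:pi2}, that applying $\pr$ to any extension of a tournament $\m{W}$ returns a copy of $\m{W}$; here it guarantees that the induced substructure is one of the two admissible extensions $\m{D}_n, \m{E}_n$ of $\m{C}_n$. Once this is in place the conclusion is immediate, since the choice of $n$ forces every extension of $\m{X}$ to embed into each of $\m{D}_n$ and $\m{E}_n$.
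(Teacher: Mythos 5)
Your argument is correct, but it is not the route the paper takes. You reduce the lemma to Lemma \ref{lem:extprop}: choose $n$ large, locate a copy of $\m{C}_n$ inside $\pr(\m{C})\cong\Cy$, and observe that the substructure of $\m{C}$ induced on its vertex set is an extension of $\m{C}_n$, hence isomorphic to $\m{D}_n$ or $\m{E}_n$, each of which absorbs every extension of $\m{X}$. The step you rightly flag as delicate --- that $\m{C}$ restricted to the supporting vertex set is genuinely an extension of $\m{C}_n$ --- holds because $\pr$ is defined pairwise and therefore commutes with passing to induced substructures, so the proof goes through. The paper argues differently and more directly: for an arbitrary extension $\m{B}=(\Cy,B_1,B_2,<)$ of $\Cy$ it verifies, by a four-case analysis according to which parts $x$ and $y$ lie in, that both $B_1$ and $B_2$ are dense in the linear order $(\m{B},<)$, whence $\Q_2$, and with it every element of $\p_2$, embeds into $\m{B}$. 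Your approach is more economical for the lemma as literally stated, since it recycles machinery already in place from the finite Ramsey degree computation. The paper's approach buys a strictly stronger conclusion --- $\Q_2$ itself embeds into every extension of $\Cy$ --- and that stronger fact is precisely what the subsequent lower-bound argument invokes when it asserts that $\lambda_i$ takes all of its values on $\binom{\m{B}}{\m{A}_i}$; if your proof were substituted for the paper's, the density argument would still be needed at that later point.
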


\begin{proof}
We prove that for every extension $\m{B} = (\Cy , B_1, B_2,<)$ of $\Cy$, the following holds \[ \forall x, y \in \Cy \ \ \forall i\in \{ 1,2\} \ \ \left(x<y \rightarrow \exists z \in B_i \ \ x<z<y\right) \ \ (*)\enspace .\] 

Assuming that $(*)$ holds, $B_1$ and $B_2$ are dense in $\m{B}$. It follows that $\Q _2$, and therefore every element of $\p _2$, embeds into $\m{B}$. In particular, every extension of $\m{X}$ embeds into $\m{B}$, which finishes the proof of Lemma \ref{claimm:ext}. We consequently turn to the proof of $(*)$. Without loss of generality, we may assume that $i=1$. We have several elementary cases to verify:

\begin{enumerate}
\item If $x,y \in B_1$. Fix $z \in \Cy$ such that $x \la{\Cy} z \la{\Cy} y$. Then $z \in B_1$. Indeed, if not, then $z \in B_2$ and so $x > z$ and $z > y$. Hence $x>y$, a contradiction.

\item If $x,y \in B_2$, then $z \in \Cy$ such that $x \ra{\Cy} z \ra{\Cy} y$ works. 

\item If $x \in B_1$ and $y \in B_2$, then $z \in \Cy$ such that $x \la{\Cy} z$ and $y \la{\Cy} z$ works. 

\item If $x \in B_2$ and $y \in B_1$, then $z \in \Cy$ such that $z \la{\Cy} x$ and $z \la{\Cy} y$ works. 
\end{enumerate}

This finishes the proof of Lemma \ref{claimm:ext}.
\end{proof}

We can now show $T(\m{X}) \leq T_{\C}(\m{X})$ by producing a map $\chi$ on $\binom{\Cy}{\m{X}}$ taking $T(\m{X})$ values on the set $\binom{C}{\m{X}}$ whenever $C \in \binom{\Cy}{\Cy}$. First, for every $i<t_{\C}(\m{X})$, Theorem \ref{thm:bigRdP} guarantees the existence of a map $\lambda _i : \funct{\binom{\Q_2}{\m{A}_i}}{[\tan^{(2|\m{X}|-1)}(0)]}$ witnessing that the big Ramsey degree of $\m{A}_i$ in $\p_2$ is equal to $\tan^{(2|\m{X}|-1)}(0)$. Next, consider $\Cy$, seen as $\pr (\Q_2)$. Then given a copy $\mc{X}$ of $\m{X}$ in $\Cy$, the substructure $\m{A}(\mc{X})$ of $\Q_2$ supported by $\mc{X}$ is an extension of $\m{X}$ in $\p _2$ and is isomorphic to a unique element of the family $(\m{A}_i)_{i<t_{\C}(\m{X})}$. Define then the map $\chi : \funct{\binom{\Cy}{\m{X}}}{[t_{\C}(\m{X})]\times[\tan^{(2|\m{X}|-1)}(0)]}$ by \[\chi (\mc{X})=( i , \lambda_i (\m{A}(\mc{X}))\enspace .\] where $i<t_{\C}(\m{X})$ is the unique natural such that $\m{A}(\mc{X}) \cong \m{A}_i$. Then $\chi$ is as required: Let $C \in \binom{\Cy}{\Cy}$. The substructure $\m{B}$ of $\Q_2$ supported by $C$ is an extension of $\Cy$ and by Lemma \ref{claimm:ext}, all the extensions of $\m{X}$ embed in $\m{B}$. Additionnally, $\Q_2$ embeds into $\m{B}$ so $\lambda_i$ takes $\Delta_{|\m{X}|}$ many values on $\binom{\m{B}}{\m{A}_i}$ for every $i$. Thus, $\chi$ takes $T(\m{X})$ many values on $\binom{C}{\m{X}}$. This shows that $T(\m{X}) \leq T_{\C}(\m{X})$ and finishes the proof of Theorem \ref{thm:bigRdC}. 

\subsection{Elementary values of $T_{\C}(\m{X})$}  

As previously, we finish this section by collecting the elementary values of $T_{\C}(\m{X})$:

\begin{figure}[h]
\setlength{\unitlength}{1mm}
\begin{picture}(110,30)(0,0)

\put(2,19){$\m{X}$}
\put(0,5){$T_{\C}(\m{X})$}

\put(15,20){\circle*{1}}
\put(15,5){$2$}

\put(25,20){\circle*{1}}
\put(45,20){\circle*{1}}
\put(44,20){\vector(-1,0){18}}
\put(35,5){$8$}

\put(65,25){\circle*{1}}
\put(55,15){\circle*{1}}
\put(75,15){\circle*{1}}
\put(74,15){\vector(-1,0){18}}
\put(64,24){\vector(-1,-1){8}}
\put(74,16){\vector(-1,1){8}}
\put(65,5){$96$}

\put(95,25){\circle*{1}}
\put(85,15){\circle*{1}}
\put(105,15){\circle*{1}}
\put(86,15){\vector(1,0){18}}
\put(94,24){\vector(-1,-1){8}}
\put(104,16){\vector(-1,1){8}}
\put(95,5){$32$}

\end{picture}
\caption{Elementary values of $T_{\C}(\m{X})$}
\end{figure}
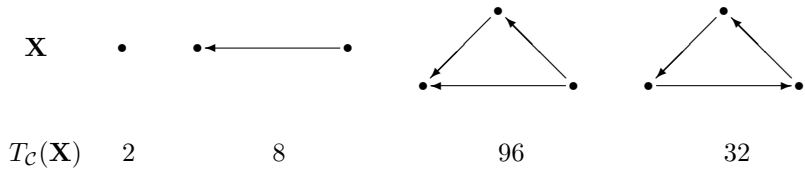

\section{A colored version of Milliken's theorem}

\label{section:Milliken}

In Section \ref{section:bigRdC}, we provided a proof of Theorem \ref{thm:bigRdC} assuming Theorem \ref{thm:bigRdP}. The goal of the present section is to make a first step towards a proof of Theorem \ref{thm:bigRdP} by proving a strengthening of the so-called Milliken theorem. The motivation behind the strategy here really comes from the proof of Theorem \ref{thm:bigRdP} when $n=1$. This was completed by Devlin in \cite{D} thanks to two main ingredients. The first one is a detailed analysis of how copies of $\Q$ may appear inside $\Q$ when $\Q$ is identified with the complete binary tree $[2]^{<\infty}$ of all finite sequences of 0's and 1's ordered lexicographically. The second ingredient is a partition result on trees due to Milliken in \cite{Mi}. In our case, where we are interested in $\Q _n$ instead of $\Q$, the relevant objects to study are not trees anymore but what we will call \emph{colored trees}. In that context, Devlin's ideas can be applied with few modifications to identify how copies of $\Q _n$ may appear inside $\Q_n$. Those are presented in Section \ref{section:bigRdP}. However, the relevant version of Milliken's theorem requires more work and the purpose of the present section is to show it can be completed. We start with a short reminder about the combinatorial structures lying at the heart of Milliken's theorem: order-theoretic trees.  

In what follows, a \emph{tree} is a partially
ordered set $(T, \leq)$ such that given any element $t
\in T$, the set $\{ s \in T : s \leq t \}$ is finite and linearly ordered by $\leq$. The number of predecessors of $t \in T$, $\mathrm{ht}(t) = |\{ s \in T : s < t \}|$ is the \emph{height of} $t
\in T$. The $m$-th level of $T$ is $T(m) = \{t \in T : \mathrm{ht}(t) = m \}$. The \emph{height
of} $T$ is the least $m$ such that $T(m) = \emptyset$ if such an $m$ exists. When no such $m$ exists, we say that $T$ has infinite height. When $|T(0)| = 1$, we say that $T$ is \emph{rooted} and we denote the root of $T$ by $root (T)$. $T$ is \emph{finitely branching} when every element of $T$ has only finitely many immediate successors. When $T$ is a tree, the tree structure on $T$ induces a tree structure on every subset $S \subset T$. $S$ is then called a \emph{subtree} of $T$. Here, all the trees we will consider will be rooted subtrees of the tree $\N^{<\infty}$ of all finite sequences of naturals ordered by initial segment. That is, every element of $\N^{<\infty}$ is a map $t : \funct{[m]}{\N}$
for some natural $m \in \N$. In the sequel, this natural is denoted $|t|$ and is thought as the length of the sequence $t$. The ordering $\leq$ is then defined by $t \leq s$ iff $|t| \leq |s|$ and \[ \forall k \in [|t|], \ \ t(k) = s(k)\enspace .\] 

That is, if we think of $t$ as the sequence of digits $t(0)t(1)\ldots t(|t|-1)$, then $t\leq s$ simply means that $s$ is obtained from $t$ by adding some extra digits to the right of $t$, ie \[ s= t(0)t(1)\ldots t(|t|-1)s(|t|)s(|t|+1)\ldots s(|s|-1)\enspace .\]

The main concept attached to Milliken's theorem is the concept of \emph{strong subtree}. Fix a downwards closed finitely branching subtree $T$ of $\N ^{< \infty}$ with infinite height. Say that a subtree $S$ of $T$ is \emph{strong} when 
\begin{enumerate}
	\item $S$ has a smallest element.  
	\item Every level of $S$ is included in a level of $T$. 
	\item For every $s \in S$ not maximal in $S$ and every immediate successor $t$ of $s$ in $T$ there is exactly one immediate successor of $s$ in $S$ extending $t$. 
\end{enumerate}

An example of strong subtree in provided in Figure \ref{fig:strongsubtree}. For a natural $m>0$, denote by $\mathcal{S}_m (T)$ the set of all strong subtrees of $T$ of height $m$. Denote also by $\mathcal{S}_{\infty} (T)$ the set of all strong subtrees of $T$ of infinite height. 

\begin{figure}[h]
\begin{center}
\hskip-10pt\includegraphics[width=180pt]{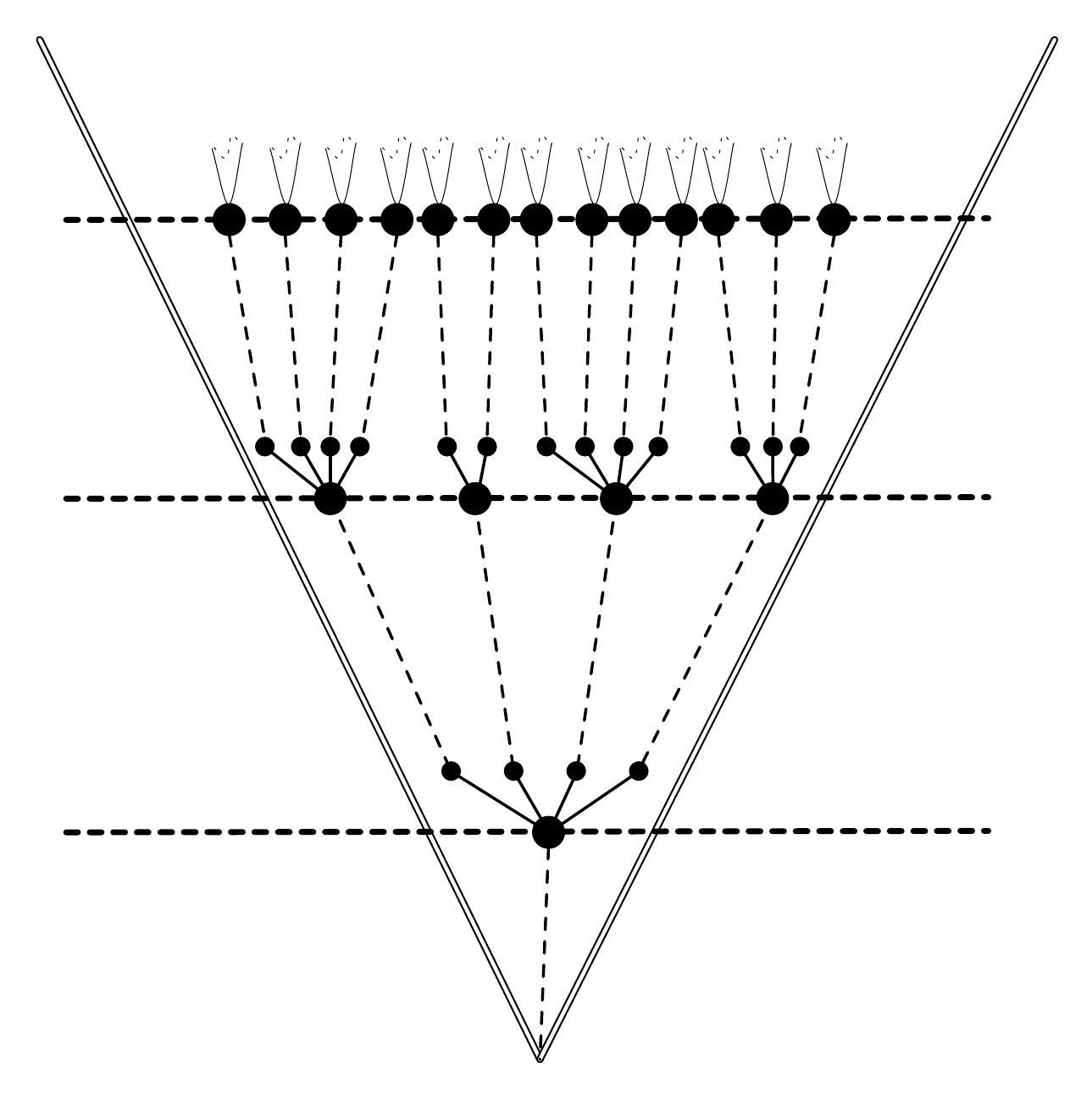}%
\end{center}
\label{fig:strongsubtree}
\caption{A strong subtree}
\end{figure}

\begin{thm}[Milliken \cite{Mi}] 

\label{thm:Milliken}

Let $T$ be a nonempty downward closed finitely branching subtree of $\N ^{< \infty}$ with infinite height. Let $k,m>0$ be naturals. Then for every map $\chi : \funct{\mathcal{S}_m (T)}{[k]}$, there is $S \in \mathcal{S}_{\infty}(T)$ such that $\chi$ is constant on $\mathcal{S}_m (S)$.  

\end{thm}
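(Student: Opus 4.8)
The plan is to deduce Theorem \ref{thm:Milliken} from the Halpern--L\"auchli theorem, which is the standard engine behind all strong-subtree partition results. In the form convenient here it reads: given finitely many nonempty downward closed finitely branching subtrees $T_0,\dots,T_{d-1}$ of $\N^{<\infty}$ of infinite height and a coloring of the level products $\bigcup_n \prod_{i<d} T_i(n)$ with finitely many colors, there exist strong subtrees $S_i$ of $T_i$ sharing a common infinite level set such that the level products $\bigcup_n \prod_{i<d} S_i(n)$ are monochromatic. I would either cite this or include its proof (an induction on $d$ via a Ramsey/compactness argument on the finite levels together with a pigeonhole over the color sequence of the levels); in any case it is logically prior to Milliken's theorem and may be taken as a black box.

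The case $m=1$ is then immediate: a strong subtree of height $1$ is a single node, so $\mathcal{S}_1(T)$ is exactly the node set of $T$, and applying Halpern--L\"auchli with $d=1$ to $\chi$ produces $S \in \mathcal{S}_{\infty}(T)$ all of whose elements of $\mathcal{S}_1(S)$ receive the same color. For the inductive step I would assume the result for height $m$ and treat a coloring $\chi : \mathcal{S}_{m+1}(T) \to [k]$. Every $U \in \mathcal{S}_{m+1}(T)$ splits canonically into its bottom $m$ levels, which form a strong subtree $W \in \mathcal{S}_m(T)$, together with a top level obtained by choosing, above each maximal node of $W$ and each of its immediate $T$-successors, exactly one node lying on a common higher level of $T$. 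For a fixed $W$ these top-level choices range precisely over a level product of the cones of $T$ above the immediate successors of the maximal nodes of $W$, so with $W$ held fixed $\chi$ restricts to a finite coloring of such a level product.

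By Halpern--L\"auchli one may refine those cones so that the color of $U$ depends only on $W$ and not on the chosen top level, yielding a reduced coloring $W \mapsto c(W)$ on height-$m$ strong subtrees; applying the induction hypothesis to $c$ produces an infinite strong subtree on which $c$ is constant, and together with the reduction this makes $\chi$ constant on all of $\mathcal{S}_{m+1}$ of that subtree. The main obstacle is that the Halpern--L\"auchli refinement can only be carried out for one $W$ at a time, whereas the reduction must hold simultaneously for every height-$m$ strong subtree $W$ sitting inside the final object. This is handled by a fusion (diagonalization) argument: one builds $S$ as an increasing union of finite strong subtrees, and at each stage applies Halpern--L\"auchli to the finitely many cones hanging above the nodes committed so far, thereby stabilizing the colors of all one-level extensions of the height-$m$ configurations already present before passing to the next level. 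Arranging the bookkeeping so that every $W \in \mathcal{S}_m(S)$ is caught at some finite stage is the delicate part; once it is in place, the reduction to the induction hypothesis goes through as described.
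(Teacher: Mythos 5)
The paper offers no proof of this statement: it is quoted directly from Milliken \cite{Mi} and used as a black box (it serves as the base case $n=1$ in the proof of Theorem \ref{thm:Milliken'}), so there is no internal proof to compare yours against. What can be said is that your outline is the standard derivation of Milliken's theorem from the Halpern--L\"auchli theorem --- essentially the route taken in Milliken's original paper and in \cite{T} --- and it is correct as a sketch. The decomposition of a height-$(m+1)$ strong subtree into its bottom $m$ levels $W$ plus a top level ranging over a level product of cones is right, and induction on $m$ with the $d=1$ instance of Halpern--L\"auchli handling $m=1$ is the right engine. The one genuinely delicate point, which you correctly flag, is the fusion: you must apply Halpern--L\"auchli to the cones above the immediate successors of the maximal nodes of each candidate $W$ at the moment those successor nodes are committed, \emph{before} any further level of $S$ is fixed, so that every admissible top level is afterwards drawn from the already-refined cones; if some top levels were committed earlier, the reduced coloring $c(W)$ would not be well defined. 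Since the tree is finitely branching with finite levels, only finitely many candidate configurations appear at each stage, so this ordering can be arranged and the reduction to the induction hypothesis then goes through as you describe.
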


For our purposes, we need a stronger version of Milliken's theorem relative to \emph{$n$-colored trees}. Let $\alpha \in \N \cup \{ \infty\}$ and $n>0$ be a natural. An $n$-colored tree of height $\alpha$ is a tree $T$ of height $\alpha$ together with an \emph{$n$-coloring sequence} $\tau$ assigning an element of $[n]$ (thought as a color) to each of the levels of $T$ ($\tau(i)$ then corresponds to the color of $T(i)$, the level $i$ of $T$). If $S$ is a strong subtree of $T$, $\tau$ induces an $n$-coloring sequence of $S$ provided by a subsequence of $\tau$. For $\beta \leq \alpha$ and $\sigma$ a sequence of length $\beta$ with values in $[n]$, let $\mathcal{S}_{\sigma}(T)$ denote the set of all strong subtrees of $T$ such that the coloring sequence induced by $\tau$ is equal to $\sigma$. 

\begin{thm} 

\label{thm:Milliken'}

Let $T$ be a nonempty downward closed finitely branching subtree of $\N ^{< \infty}$ with infinite height. Let $n>0$ be a natural and $\Sigma$ an $n$-coloring sequence of $T$ taking each value $i \in [n]$ infinitely many times. Let $k>0$ be a natural and $\sigma$ an $n$-coloring sequence with finite length. Then for every map $\chi : \funct{\mathcal{S}_{\sigma} (T)}{[k]}$, there is $S \in \mathcal{S}_{\Sigma}(T)$ such that $\chi$ is constant on $\mathcal{S}_{\sigma}(S)$.  

\end{thm}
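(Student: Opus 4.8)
\emph{Reductions and why one application of Milliken is not enough.} Since a strong subtree of a strong subtree is again a strong subtree, and since any $S' \in \mathcal{S}_{\Sigma}(T)$ contained in a fixed $S$ satisfies $\mathcal{S}_{\sigma}(S') \subseteq \mathcal{S}_{\sigma}(S)$, homogeneity is inherited downwards: it suffices to produce a single $S \in \mathcal{S}_{\Sigma}(T)$ on which $\chi$ is constant over $\mathcal{S}_{\sigma}(S)$. The naive attempt is to set $m = |\sigma|$, extend $\chi$ to a map $\chi' : \mathcal{S}_m(T) \longrightarrow [k]$ by assigning a dummy value to every height-$m$ strong subtree not in $\mathcal{S}_{\sigma}(T)$, and apply Theorem \ref{thm:Milliken} to obtain $S_0 \in \mathcal{S}_{\infty}(T)$ with $\chi'$ (hence $\chi$) constant on $\mathcal{S}_m(S_0) \supseteq \mathcal{S}_{\sigma}(S_0)$. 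This secures homogeneity, but Theorem \ref{thm:Milliken} gives \emph{no control over the coloring sequence induced on} $S_0$: that sequence is merely some subsequence of $\Sigma$ and may well omit one of the colors entirely. In that case $S_0 \notin \mathcal{S}_{\Sigma}(T)$, and there is in general no way to thin $S_0$ back to a strong subtree whose induced sequence equals $\Sigma$ (a strong subtree can legitimately concentrate on the levels of a single color). This pinpoints color-preservation, not homogeneity, as the real difficulty.

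\emph{The construction.} I would therefore build the desired $S$ by a fusion carried out \emph{along} $\Sigma$, rather than by a single black-box appeal to Theorem \ref{thm:Milliken}. Recursively I choose the levels $\ell_0 < \ell_1 < \cdots$ of $S$ together with their nodes, maintaining the strong-subtree branching condition (3), so that at each stage $i$ the new level is drawn from the color class $\{ h : \Sigma(h) = \Sigma(i) \}$; this class is infinite by hypothesis on $\Sigma$, so an admissible level of the required color $\Sigma(i)$ is always available above the current stem and the induced coloring sequence of the limit is forced to be exactly $\Sigma$. Interleaved with this, I homogenize $\chi$: at each stage I thin the not-yet-committed branching so that the $\chi$-value of any $\sigma$-pattern whose top level is the newly committed one is forced, depending only on its lower $m-1$ levels. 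A diagonal bookkeeping across stages yields an end-homogeneous tree, and a final pigeonhole pass upgrades end-homogeneity to outright constancy of $\chi$ on $\mathcal{S}_{\sigma}(S)$. The one-step thinning is exactly the combinatorial core underlying Milliken's Theorem \ref{thm:Milliken} (the Halpern--L\"auchli argument), applied while keeping the entire subtree below the current level intact, so that all colors remain available for continuing the recursion.

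\emph{The main obstacle.} The crux is the tension between the two goals pursued simultaneously: thinning the workspace to force $\chi$-values, while retaining, at every stage, infinitely many admissible levels of \emph{each} color $i \in [n]$ so that the construction can proceed along $\Sigma$ forever. A single full-strength use of Theorem \ref{thm:Milliken} destroys this, since its homogeneous subtree may collapse onto finitely many colors; the level-by-level fusion resolves it precisely because $\Sigma$ meets each color infinitely often, which guarantees the reservoir of each color is never exhausted by the thinning. The technically delicate points, which I expect to be the bulk of the work, are verifying that the homogenization steps can always be performed without draining any color class, that the fusion limit is a genuine strong subtree, that its induced coloring sequence is exactly $\Sigma$ so that $S \in \mathcal{S}_{\Sigma}(T)$, and that the end-homogeneity established at finite stages survives to the limit so that $\chi$ is indeed constant on all of $\mathcal{S}_{\sigma}(S)$.
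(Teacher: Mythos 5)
You have correctly isolated the real difficulty---controlling the coloring sequence induced on the infinite strong subtree, not the homogeneity---but your proposed resolution of that difficulty is exactly where the argument has a genuine gap. The sentence ``the level-by-level fusion resolves it precisely because $\Sigma$ meets each color infinitely often, which guarantees the reservoir of each color is never exhausted by the thinning'' is an assertion, not an argument, and as stated it does not follow. The hypothesis on $\Sigma$ says that each color occurs on infinitely many levels of the \emph{original} tree $T$; it says nothing about the levels that survive a Halpern--L\"auchli (or one-step Milliken) homogenization, whose output is an arbitrary infinite set of levels that may perfectly well avoid one color class altogether from some stage onward. Keeping ``the entire subtree below the current level intact'' does not help, since the problem concerns the levels above the current one. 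To make the fusion work you need the \emph{relativized} form of the Halpern--L\"auchli step: for any prescribed infinite set $D$ of levels, the homogeneous level set can be found inside $D$ (this follows from the plain form by first restricting the workspace to the nodes whose level lies in $D$). With that in hand you can, at each stage, first cut down to the levels carrying the color demanded next by $\Sigma$ and only then homogenize, and the reservoir problem disappears; without it the construction can stall. A second, smaller gap: for $|\sigma|=m\geq 2$ the ``final pigeonhole pass'' upgrading end-homogeneity to constancy is not a pigeonhole but an application of the statement for the truncated sequence $\sigma\upharpoonright (m-1)$, so your argument is really an induction on $|\sigma|$ with a colored Halpern--L\"auchli statement as base case; this is repairable but must be set up explicitly.

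For comparison, the paper avoids reopening the Halpern--L\"auchli machinery entirely and instead inducts on the number of colors $n$. After normalizing $\Sigma$ to the periodic sequence $k\mapsto k \bmod (n+1)$, it defines a projection $\pj$ collapsing each color-$n$ level onto the color-$(n-1)$ level immediately below it, proves (Lemma \ref{lem:sigmaT}) that every strong subtree of $\pj(T)$ with pattern $\pj(\sigma)$ admits a \emph{unique} canonical lift in $\mathcal{S}_{\sigma}(T)$ satisfying a normalization condition $(*)$, transfers the coloring along this bijection, and applies the induction hypothesis, with plain Milliken (Theorem \ref{thm:Milliken}) as the case $n=1$. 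Your route, once the relativized Halpern--L\"auchli step is supplied and the induction on $|\sigma|$ is organized, would give a self-contained proof that does not use Theorem \ref{thm:Milliken} as a black box; the paper's route is shorter because the only new combinatorics it requires is the lifting lemma.
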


\begin{proof}
We proceed by induction on $n$. The case $n=1$ is handled by the original version of Milliken's theorem. We therefore concentrate on the induction step. Assume that Theorem \ref{thm:Milliken'} holds for the natural $n$. We show that it also holds for the natural $n+1$. Since $\Sigma$ takes each value $i \in [n]$ infinitely many times, then by going to a subtree of $T$ if necessary, we may arrange that $\Sigma$ is the sequence defined by \[ \Sigma (k) = k \mod(n+1)\enspace .\] 

We may also assume that for every $t \in T$, the set $\{ j \in \omega : t ^{\frown}j \in T\}$ is an initial segment of $\N$ (here, $t ^{\frown}j$ denotes the concatenation of $t$ and $j$, that is the sequence obtained from $t$ by extending it with the extra digit $j$. Formally $t ^{\frown}j (n) = t(n)$ for every $n<|t|$ and $t ^{\frown}j (|t|) = j$). 

Let $\pj : \funct{T}{T}$ be the function mapping the elements of $T$ with color $n$ onto their immediate predecessor in $T$ and leaving the other elements of $T$ fixed. For an $(n+1)$-coloring sequence $\tau$, let $\pj(\tau)$ be the $n$-coloring sequence obtained from $\tau$ by replacing every occurence of $n$ in $\tau$ by $(n-1)$.

Say that a strong subtree $U$ of $T$ satisfies $(*)$ when
\begin{enumerate}
	\item For every $u \in U$ and for every immediate succesor $u'$ of $u$ in $U$, if $u$ has color $(n-1)$ and $t$ is the immediate successor of $u$ in $T$ such that $t\leq u'$, then $t^{\frown}0 \leq u'$. 
	\item For every $u \in U$ with color $n$, $u = \pj(u)^{\frown}0$.  
\end{enumerate}

\begin{lemma}
\label{lem:sigmaT}
Let $\sigma$ be an $(n+1)$-coloring sequence with finite length, $S \in \mathcal{S}_{\pj (\sigma)}(\pj (T))$. Then there is a unique $\sigma ^* T$ in $\mathcal{S}_{\sigma}(T)$ such that 
\begin{itemize}
	\item $\sigma ^* T$ satisfies $(*)$.
	\item For every $k$, $\pj (\sigma ^* T(k)) \subset S(k)$. 
\end{itemize}
\end{lemma}

Assuming Lemma \ref{lem:sigmaT}, the induction step can be carried out as follows: let $\sigma$ be an $(n+1)$-coloring sequence with finite length and $\chi : \funct{\mathcal{S}_{\sigma}(T)}{[k]}$. Using Lemma \ref{lem:sigmaT}, transfer $\chi$ to $\lambda : \funct{\mathcal{S}_{\pj (\sigma)}(\pj(T))}{[k]}$ by setting $\lambda(S) = \chi (\sigma^*S)$. Then, using Theorem \ref{thm:Milliken'} for the natural $n$, find a strong subtree $U$ of $\pj (T)$ with coloring sequence $\pj (\Sigma)$ such that $\mathcal{S}_{\pj (\sigma)}(U)$ is $\lambda$-monochromatic with color $\varepsilon$. By refining $U$ if necessary, we may assume that no two consecutive levels of $U$ are consecutive in $T$. Then $\Sigma^*U \in \mathcal{S}_{\Sigma}(T)$ and satisfies $(*)$. We claim that $\chi$ is constant on  $\mathcal{S}_{\sigma}(\Sigma^*U)$. Indeed, let $V \in \mathcal{S}_{\sigma}(\Sigma^*U)$. Then $\pj (V) \subset \pj (\Sigma^*U)\subset U$ and it has coloring sequence $\pj(\sigma)$. Let $W \subset U$ be a strong subtree with the same height as $\pj (V)$ and such that $\pj (V) \subset W$. Since $\Sigma^*U$ has property $(*)$, so does $V$. By Lemma \ref{lem:sigmaT}, it follows that $V = \sigma^*W$. Hence \[\chi(V) = \chi (\sigma^*W) = \lambda(W) = \varepsilon. \qedhere\]\end{proof}

\begin{proof}[Proof of Lemma \ref{lem:sigmaT}]
For a tree $V$ and an element $v \in V$, let $IS_V(v)$ denote the set of all immediate successors of $v$ in $V$. We start by proving the existence of a tree $U$ fulfilling the requirements. We proceed inductively and construct $U$ level by level. For $U(0)$, we distinguish two cases. If $\sigma(0)\neq n $, we set $U(0) = S(0) (= \{root(S)\})$. If $\sigma(0) = n $, we set $U(0) = \{root(S)^{\frown}0\}$. Assume that $U(0)\ldots U(k)$ are constructed. 

Case 1: $\sigma(k) \neq n-1$. 
Then for every $u \in U(k)$, any element $v$ of $IS_T(u)$ is also in $IS_{\pj (T)}(\pj(u))$. Thus, there is a unique $\phi (v) \in S(k+1)$ such that $v \leq \phi (v)$. If $\sigma (k+1)\neq n $, $U(k+1)$ is formed by collecting all the $\phi(v)$'s. Otherwise, $\sigma (k+1) = n$ and $U(k+1)$ is formed by collecting all the $\phi (v)^{\frown}0$'s.

Case 2: $\sigma(k) = n-1$.

Then the immediate successors of the elements of $U(k)$ in $T$ have color $n$ and are not in $\pj (T)$. For $u \in U(k)$ and $v \in IS_T(u)$, $v \notin IS_{\pj (T)}(\pj(u))$ and $v$ may be dominated by more than one element in $S$. However, $v^{\frown}0 \in IS_{\pj (T)}(\pj(u))$ is dominated by exactly one element in $S$. Let $\phi(v)$ denote this element. Form $U(k+1)$ as in Case 1 by collecting all the $\phi (v)$'s if $\sigma (k+1) \neq n $ and all the $\phi (v)^{\frown}0$'s otherwise. 

Repeating this procedure, we end up with a tree $U$. This tree is as required as at every step, the construction makes sure that it is strong and that the property $(*)$ is satisfied.

We now show that this procedure is actually the only possible one. Assume that $U$ and $U'$ are as required. We show that $U = U'$. First of all, it should be clear that $U$ and $U'$ have the same root. We now show that if $u \in U \cap U'$, then $IS_U(u) = IS_{U'}(u)$. It suffices to show that $IS_U(u) \subset IS_{U'}(u)$. Let $w \in IS_U(u)$. 

\begin{claimm}
$\pj (w) \in IS_{\pj(U)}(\pj(u)) $.
\end{claimm}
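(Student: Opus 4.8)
The plan is to prove the claim by verifying the three conditions that define an immediate successor in the image tree $\pj(U)$: that $\pj(u)$ and $\pj(w)$ both lie in $\pj(U)$, that $\pj(u) < \pj(w)$, and that no element of $\pj(U)$ lies strictly between them. Write $u \in U(k)$ and $w \in U(k+1)$, so that $w$ sits on the level of $U$ immediately above $u$. The membership $\pj(u), \pj(w) \in \pj(U)$ is immediate from $u, w \in U$, and since $U$ is assumed to satisfy the requirements of Lemma \ref{lem:sigmaT}, the second bullet $\pj(U(k)) \subset S(k)$ places the two images on consecutive levels of $S$, namely $\pj(u) \in S(k)$ and $\pj(w) \in S(k+1)$. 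Everything else should then be read off from the fact that $S$ is a strong subtree, whose levels occur at strictly increasing heights in $T$.

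For the strict inequality $\pj(u) < \pj(w)$, I would first record that $\pj$ never raises height: one has $\pj(x) \le x$ for every $x$, since $\pj$ is either the identity or the immediate-predecessor map. Hence $\pj(u) \le u < w$ and $\pj(w) \le w$, so both $\pj(u)$ and $\pj(w)$ belong to the chain $\{ x \in T : x \le w \}$ and are therefore comparable. Since $\pj(u) \in S(k)$ and $\pj(w) \in S(k+1)$ with $S$ strong, every element of $S(k)$ lies at strictly smaller height than every element of $S(k+1)$, so $\mathrm{ht}(\pj(u)) < \mathrm{ht}(\pj(w))$; comparability of two nodes of distinct height forces the one of smaller height to be strictly below, giving $\pj(u) < \pj(w)$.

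For immediacy, I would argue by contradiction: suppose some $z \in \pj(U)$ satisfies $\pj(u) < z < \pj(w)$. As noted above the levels of $\pj(U)$ are exactly the sets $\pj(U(j))$, so $z \in \pj(U(j)) \subset S(j)$ for some $j$, and the two strict inequalities give $\mathrm{ht}(S(k)) < \mathrm{ht}(S(j)) < \mathrm{ht}(S(k+1))$. This is impossible, because $S(k)$ and $S(k+1)$ are consecutive levels of the strong subtree $S$, whose levels have strictly increasing heights, so there is no $S$-level of intermediate height. This would establish the claim.

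I expect the main obstacle to be the comparability step, where it is tempting (and, on this route, unnecessary) to split into cases according to whether $u$ and $w$ carry color $n$; the case $\sigma(k+1) = n$, in which $\pj(w)$ drops to the immediate predecessor of $w$, is the one where a collapse $\pj(u) = \pj(w)$ would otherwise have to be excluded by hand. The clean route sketched above sidesteps this by routing the comparison through the chain below $w$ together with the height structure inherited from $S$, so that property $(*)$ and the periodic coloring $\Sigma(k) = k \bmod (n+1)$ re-enter only in the remainder of the uniqueness argument, where $w$ must be recovered from $\pj(w)$.
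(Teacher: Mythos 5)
Your proof is correct, but it takes a genuinely different route from the one in the paper. The paper argues entirely inside $U$: given $v \in U$ with $u < v$ and $\pj(v) \leq \pj(w)$, it uses the branching structure of the strong subtree $U$ at $u$ (comparability of the images forces $v$ and $w$ to lie above the same immediate successor of $u$ in $U$, which is $w$ itself) together with the monotonicity of $\pj$ to conclude $\pj(w) \leq \pj(v)$ and hence equality; the second requirement of Lemma \ref{lem:sigmaT} is not invoked in the Claim at all. You instead route everything through $S$: the containments $\pj(U(j)) \subset S(j)$ place $\pj(u)$ and $\pj(w)$ on consecutive levels of the strong subtree $S$, whose levels occupy strictly increasing heights of $T$, and both the strict inequality $\pj(u) < \pj(w)$ (via comparability along the chain below $w$) and immediacy (no $S$-level of intermediate height) then drop out. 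What your approach buys is that it explicitly excludes the degenerate collapse $\pj(u) = \pj(w)$, which can genuinely occur for a strong subtree satisfying $(*)$ alone when $w$ is the immediate $T$-successor of $u$ and carries color $n$; the paper's argument leaves this strictness implicit. One cosmetic remark: you do not need (and have not shown) that the sets $\pj(U(j))$ are literally the levels of the tree $\pj(U)$ --- all your argument uses is that every element of $\pj(U)$ lies in some $\pj(U(j)) \subset S(j)$, which is immediate.
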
 
  
\begin{proof}
Let $v \in U$ be such that $\pj(v) \leq \pj(w)$ and $u<v$. Since $\pj(v)$ and $\pj (w)$ are comparable, $v$ and $w$ are above the same immediate successor of $u$ in $U$. Hence $w \leq v$ and $\pj(w) \leq \pj(v)$. \end{proof}

So, fix $t \in IS_T(u)$ and $v \in IS_{\pj(T)}(\pj(u))$ such that \[ u\leq t \leq w \ \ \mathrm{and} \ \ \pj(u) \leq v \leq \pj(w)\enspace .\] 

Observe that because $\pj (T) \subset T$, we have $t\leq v$. Let $w' \in IS_{U'}(u)$ be such that $u \leq t \leq w'$. Note that as for $w$, we have $\pj (w') \in IS_{\pj( U')}(\pj(u))$. 

\begin{claimm}
$v \leq \pj(w')$.
\end{claimm}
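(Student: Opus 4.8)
The plan is to argue by cases according to the color of $u$, equivalently according to whether the immediate successor $t$ of $u$ in $T$ carries color $n$. Under the normalization $\Sigma(k) = k \bmod (n+1)$ fixed at the start, $t$ has color $n$ exactly when $u$ has color $n-1$. The one structural fact I would record first is that $\pj$ is order-preserving and fixes every node of color $\neq n$ while sending a color-$n$ node to its immediate predecessor; I will use this repeatedly to transfer inequalities between $T$ and $\pj(T)$.

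Suppose first that $u$ does not have color $n-1$. Then $t$ is not collapsed, so $\pj(t) = t$, and moreover $t \in IS_{\pj(T)}(\pj(u))$: if $u$ itself has color $\neq n$ this is immediate, since the children of $u$ in $T$ are not collapsed; while if $u$ has color $n$ one uses $(*)(2)$ (so $u = \pj(u)^{\frown}0$) to recognize $t$ as a grandchild of $\pj(u)$, hence an immediate successor of $\pj(u)$ in $\pj(T)$. Now $v \in IS_{\pj(T)}(\pj(u))$ together with $t \leq v$, where $t$ is already immediate over $\pj(u)$ in $\pj(T)$, forces $v = t$. The conclusion $v \leq \pj(w')$ is then simply $t \leq w'$ pushed through the order-preserving $\pj$, using $\pj(t) = t$.

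The substantive case is $u$ of color $n-1$, so $t$ has color $n$ and is collapsed. Here I would invoke $(*)(1)$ twice. Applied to $U$ at $u$ and its successor $w$ (recall $t \leq w$), it yields $t^{\frown}0 \leq w$; since $t^{\frown}0$ has color $0$ it cannot coincide with a color-$n$ node, so even when $w$ has color $n$ (whence $\pj(w)$ is its immediate predecessor) we still obtain $t^{\frown}0 \leq \pj(w)$. On the other hand $v$ is an immediate successor of $\pj(u) = u$ in $\pj(T)$ above $t$, and because the color-$n$ children of $u$ are collapsed, such a $v$ is a grandchild $t^{\frown}i$ of $u$. As both $t^{\frown}0$ and $t^{\frown}i = v$ lie below the single node $\pj(w)$ and are children of $t$, they coincide, so $v = t^{\frown}0$. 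Finally $(*)(1)$ applied to $U'$ at $u$ and $w'$ (recall $t \leq w'$) gives $t^{\frown}0 \leq w'$, and the same color observation upgrades this to $t^{\frown}0 \leq \pj(w')$, that is $v \leq \pj(w')$.

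I expect the main difficulty to be the bookkeeping around the collapsed color-$n$ level: one must check that ``$t^{\frown}0 \leq w$'' genuinely upgrades to ``$t^{\frown}0 \leq \pj(w)$'' even when $w$ itself has color $n$, and that $v$ is pinned to $t^{\frown}0$ rather than to some other grandchild $t^{\frown}i$. Both hinge on the elementary observation that two distinct children of $t$ cannot simultaneously sit below a single node, together with the color mismatch that keeps a color-$0$ node off any color-$n$ level.
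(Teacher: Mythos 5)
Your proof is correct and follows essentially the same route as the paper: the same case split (on whether $t$ lies in $\pj(T)$, equivalently on whether $u$ has color $n-1$), the identification $v=t$ in the first case, and in the second case the double application of $(*)$(1) to pin down $v=t^{\frown}0$ and then push $t^{\frown}0\leq w'$ down to $\pj(w')$. You merely spell out a few steps the paper leaves implicit (why $t=v$ forces the conclusion, and why $\leq w$ upgrades to $\leq \pj(w)$ via the color mismatch), which is fine.
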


\begin{proof}
If $t \in \pj(T)$, then $t=v$ and we are done. Otherwise, $t$ has color $n$ and $u \leq t < v \leq w$. By $(*)$ for $U$, $t^{\frown}0 \leq w$. Hence $t^{\frown}0 = v$. Now, by $(*)$ for $U'$, we have $t^{\frown}0 \leq w'$. Hence, $v \leq w'$ and $v \leq \pj (w')$. \end{proof}

It follows that $\pj (w)$ and $\pj (w')$ are in $S$ and above $v$. Since they have the same height, they must be equal. Hence, $w=w'$. \end{proof}

\section{Big Ramsey degrees in $\p _n$}

\label{section:bigRdP}

In this section, we show how Theorem \ref{thm:bigRdP} can be proven thanks to the machinery developed in Section \ref{section:Milliken}. As already mentioned, this is essentially done by using the ideas that were used by Devlin in \cite{D} to study the partition calculus of the rationals. For that reason, several results are stated without proof. Our presentation here, however, follows a different path. Namely, it repeats the exposition of the forthcoming book \cite{T}. All the details of the proofs that we omit here will appear in \cite{T} together with a wealth of other applications of Milliken's theorem.     

In the sequel, we work with the tree $T = [2]^{<\infty}$ of finite sequences of $0$'s and $1$'s colored by the map $\Sigma$ defined by $\Sigma(i) = (i \mod n) +1$ for every $i \in \N$. Noticing that $(T,<_{lex})$ and $(\Q,<)$ are isomorphic linear orderings and that in $(T,<_{lex})$, the subset $T_i$ of all the elements with color $i$ is dense whenever $i=1\ldots n$, we see that the colored tree $T$ is isomorphic to $\Q _n$. 

For $s , t \in T$, set \[ s \wedge t = \max \{ u \in T : u \subset s, \ u \subset t \}\enspace .\]

For $A \subset T$, set \[ A^{\wedge} = \{ s \wedge t : s, t \in A\}\enspace .\]

Note that $A \subset A^{\wedge}$ and that $A^{\wedge}$ is the minimal rooted subtree of $T$ containing $A$. Define an equivalence relation $\mathrm{Em}$ on the collection of all finite subsets of $T$ as follows: for $A, B \subset T$, set $A \mathrm{Em} B$\index{$A \mathrm{Em} B$} when there is a bijection $f : \funct{A^{\wedge}}{B^{\wedge}}$ such that for every $s, t \in A^{\wedge}$:

\vspace{0.5em}
\hspace{1em} 
i) $s \leq t \leftrightarrow f(s) \leq f(t)$.

\vspace{0.5em}
\hspace{1em} 
ii) $|s|<|t| \leftrightarrow |f(s)|<|f(t)|$. 

\vspace{0.5em}
\hspace{1em} 
iii) $s \in A \leftrightarrow f(s) \in B$.

\vspace{0.5em}
\hspace{1em} 
iv) $t(|s|)=f(t)(|f(s)|)$ whenever $|s|<|t|$. 

\vspace{0.5em}
\hspace{1em} 
iv) $f(s)$ has color $i$ whenever $s$ has color $i$.

\vspace{0.5em}

It should be clear that $\mathrm{Em}$\index{$\mathrm{Em}$} is an equivalence relation. Given $A \subset T$, let $[A]_{\mathrm{Em}}$ denote the $\mathrm{Em}$-equivalence class of $A$. Let also $\sigma_A$ denote the sequence of colors corresponding to $A^{\wedge}$. 

\begin{lemma}
\label{lem:envelope}
Let $V \in \mathcal{S}_{\sigma}(T)$ and $A \subset T$ such that $\sigma_A = \sigma$. Then there is a unique $A' \in [A]_{\mathrm{Em}}$ such that $A' \subset V$.  
\end{lemma}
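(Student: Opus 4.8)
The plan is to construct $A'$ explicitly by ``reading off'' $A^{\wedge}$ inside the strong subtree $V$ level by level, matching the meet-structure and colors dictated by $\sigma = \sigma_A$. Since $V \in \mathcal{S}_{\sigma}(T)$, its induced color sequence is exactly $\sigma$, which equals the color sequence $\sigma_A$ of $A^{\wedge}$. The strong subtree $V$ comes with a canonical level-preserving, order-preserving correspondence between its levels and the abstract ``shape'' recorded by $\sigma$: each level $V(j)$ sits inside a single level $T(m_j)$ of $T$, the levels are indexed in increasing order, and by the defining property (3) of a strong subtree, every node of $V$ that is not maximal has, for each immediate successor in $T$ of the underlying node, exactly one immediate successor in $V$ extending it. The idea is that this branching structure of $V$ is rich enough to faithfully reproduce the finite configuration $A^{\wedge}$.

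First I would set up a map $g$ from the nodes of $A^{\wedge}$ into $V$ by induction on height in $A^{\wedge}$. Enumerate the levels of $A^{\wedge}$ in increasing order of $|\cdot|$; since $\sigma_A = \sigma$ records exactly this level structure with colors, and $V$ realizes $\sigma$, I can send the root of $A^{\wedge}$ to $root(V)$ and then, at each subsequent level, send a node $t \in A^{\wedge}$ with immediate predecessor $s$ (in $A^{\wedge}$) to the unique element of $V$ lying above $g(s)$ that extends the correct immediate successor direction — the direction being dictated by the digit $t(|s|) \in \{0,1\}$ via condition (iv). Property (3) of strong subtrees guarantees that for each choice of immediate successor direction in $T$ there is \emph{exactly one} immediate successor in $V$ extending it, so $g$ is well-defined and injective, and it automatically preserves $\leq$, the relative level ordering $|s| < |t|$, the branching digits $t(|s|)$, and (because $V$ realizes $\sigma = \sigma_A$) the colors. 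I would then set $A' = g[A]$ (the image of the distinguished subset $A \subseteq A^{\wedge}$) and check that $(A')^{\wedge} = g[A^{\wedge}]$, so that $g$ itself witnesses $A \;\mathrm{Em}\; A'$ through conditions (i)--(iv); thus $A' \in [A]_{\mathrm{Em}}$ and $A' \subset V$.

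For uniqueness, suppose $A''$ is another element of $[A]_{\mathrm{Em}}$ contained in $V$, with witnessing bijection $h : A^{\wedge} \to (A'')^{\wedge}$. The point is that an $\mathrm{Em}$-isomorphism into $V$ has no freedom: I would argue by induction on height that $h$ and $g$ must agree. The roots coincide since both map to elements of $V$ at the lowest level whose color matches $\sigma(0)$, and $V$ rooted forces this to be $root(V)$. Inductively, if $h$ and $g$ agree on a node $s$, then an immediate successor $t$ of $s$ in $A^{\wedge}$ must be sent by both maps to the element of $V$ above $g(s)$ extending the immediate successor of $g(s)$ in $T$ in the direction $t(|s|)$ — condition (iv) pins down this digit, and property (3) of $V$ makes the extending node in $V$ unique. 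Hence $h = g$ on all of $A^{\wedge}$, so $A'' = h[A] = g[A] = A'$.

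The step I expect to be the main obstacle is verifying that $g$ respects the \emph{level indexing} condition (ii) together with the color condition simultaneously, i.e.\ that the increasing enumeration of levels of $A^{\wedge}$ is carried to an increasing enumeration of the relevant levels of $V$ in a way consistent with $\sigma$. This requires using that $V \in \mathcal{S}_{\sigma}(T)$ precisely so that the $j$-th level of $V$ carries color $\sigma(j) = \sigma_A(j)$, which is exactly the color of the $j$-th level of $A^{\wedge}$; one must check that the meet $s \wedge t$ of two nodes of $A$ is sent by $g$ to the meet $g(s) \wedge g(t)$ in $V$ (not merely in $T$), which follows because $V$ is closed under meets of its own elements as a subtree and the branching directions agree. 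Once the compatibility of meets, levels, and colors is in place, conditions (i)--(iv) are routine to confirm.
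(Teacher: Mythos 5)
The paper itself states Lemma \ref{lem:envelope} without proof (it is one of the results deferred to \cite{T}), so your argument has to stand on its own. Your overall strategy --- transport $A^{\wedge}$ into $V$ level by level and then argue that the constraints leave no freedom --- is the right one, and your observation that $V$ is closed under meets (which is what puts $(A')^{\wedge}$ inside $V$) is correct and important, though it follows from property (3) of strong subtrees, not from $V$ merely being a subtree. However, there is a genuine gap in the key step. You define $g(t)$, for $t$ with immediate predecessor $s$ in $A^{\wedge}$, as ``the unique element of $V$ lying above $g(s)$ that extends the immediate successor direction $t(|s|)$.'' This does not determine a unique node. If $s$ sits at level $j'$ of $A^{\wedge}$ and $t$ at level $j$, the gap $j-j'$ can be $2$ or more; this already happens for $|A|=2$, since for $A=\{a,b\}$ with $|a|<|b|$ the node $a\wedge b$ is the immediate $A^{\wedge}$-predecessor of $b$ but lies two length-levels below it. Among the $2^{\,j-j'}$ elements of $V(j)$ above $g(s)$, exactly half extend $g(s)^{\frown}t(|s|)$, so for $j-j'\geq 2$ your rule leaves at least two candidates; property (3) only singles out the next level of $V$, not level $j$. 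The same under-determination undermines your uniqueness argument, which cites only the digit $t(|s|)$ and property (3).

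What is missing is precisely the role of the passing numbers, that is, condition (iv) applied to \emph{incomparable} pairs of $A^{\wedge}$. The correct prescription is: for $t$ at level $j$ of $A^{\wedge}$, let $g(t)$ be the unique element of $V(j)$ whose digit at position $|V(i)|$ equals $t(L_i)$ for every $i<j$, where $L_0<\dots<L_{k-1}$ are the lengths occurring in $A^{\wedge}$ and $|V(i)|$ denotes the common length of the nodes of $V(i)$. Such a node exists and is unique because, by property (3), an element of $V(j)$ is determined by the sequence of branching digits it takes at the $j$ levels of $V$ below it. One then checks that this $g$ preserves order, meets, relative heights, passing numbers and colors (the last because $V\in\mathcal{S}_{\sigma}(T)$ and $\sigma=\sigma_A$), and, for uniqueness, that condition (iv) applied to $t$ together with one element of $A^{\wedge}$ from each lower level forces any witnessing bijection to take exactly these values. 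With that correction your outline goes through; as written, neither the existence nor the uniqueness half is actually established.
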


Finally, for a strong subtree $S$ of $T$, let $\restrict{[A]_{\mathrm{Em}}}{S}$ denote the set of all elements of $[A]_{\mathrm{Em}}$ included in $S$. 

\begin{thm}
Let $A$ be a finite subset of $T$. Then for every natural $k>0$ and every map $\chi : \funct{[A]_{\mathrm{Em}}}{[k]}$, there is $S \in \mathcal{S}_{\Sigma}(T)$ such that $\chi$ is constant on $\restrict{[A]_{\mathrm{Em}}}{S}$.  
\end{thm}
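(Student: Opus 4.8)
The plan is to transfer the given coloring of copies into a coloring of strong subtrees and then invoke the colored Milliken theorem. Set $\sigma = \sigma_A$, the colour sequence attached to $A^{\wedge}$; this is a coloring sequence of finite length, and since $\Sigma$ (defined by $\Sigma(i) = (i \mod n)+1$) assumes each of its values infinitely often, Theorem \ref{thm:Milliken'} will be applicable with the infinite sequence $\Sigma$. The first ingredient is Lemma \ref{lem:envelope}, which tells me that every $V \in \mathcal{S}_{\sigma}(T)$ contains exactly one member of $[A]_{\mathrm{Em}}$; I denote it $A'(V)$. This makes the map $\widetilde{\chi} : \funct{\mathcal{S}_{\sigma}(T)}{[k]}$ defined by $\widetilde{\chi}(V) = \chi(A'(V))$ well defined.

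First I would apply Theorem \ref{thm:Milliken'} to $\widetilde{\chi}$ to obtain $S \in \mathcal{S}_{\Sigma}(T)$ on which $\widetilde{\chi}$ is constant, say with value $\varepsilon$, over $\mathcal{S}_{\sigma}(S)$. Observe that any $V \in \mathcal{S}_{\sigma}(S)$ is again an element of $\mathcal{S}_{\sigma}(T)$: a strong subtree of a strong subtree is a strong subtree of $T$, and the colour sequence induced from $S$ agrees with the one induced from $T$. Hence $A'(V)$ is meaningful for such $V$ and satisfies $\chi(A'(V)) = \widetilde{\chi}(V) = \varepsilon$.

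It remains to show that $\chi$ takes only the value $\varepsilon$ on $\restrict{[A]_{\mathrm{Em}}}{S}$, and the crux is the surjectivity of $V \mapsto A'(V)$ from $\mathcal{S}_{\sigma}(S)$ onto $\restrict{[A]_{\mathrm{Em}}}{S}$. Given $B \in [A]_{\mathrm{Em}}$ with $B \subset S$, I would produce $V \in \mathcal{S}_{\sigma}(S)$ containing $B$. Since strong subtrees are meet-closed, $B^{\wedge} \subseteq S$, and $B^{\wedge}$ sits on $S$-levels whose colours spell out exactly $\sigma$ (because $\sigma_B = \sigma_A = \sigma$). Using that $S$ is strong, so that each of its nodes branches in both directions at every later level, I would enlarge $B^{\wedge}$ level by level, at precisely the $S$-levels it already occupies, adjoining filler nodes until each non-maximal node acquires exactly one immediate successor through each branch of $S$. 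The resulting $V$ is then a full strong subtree of $S$ with induced colour sequence $\sigma$, that is $V \in \mathcal{S}_{\sigma}(S)$, and $B \subset B^{\wedge} \subseteq V$. By the uniqueness clause of Lemma \ref{lem:envelope} applied to $V \in \mathcal{S}_{\sigma}(T)$, the unique member of $[A]_{\mathrm{Em}}$ contained in $V$ is $A'(V)$; as $B$ is such a member we get $A'(V) = B$, whence $\chi(B) = \widetilde{\chi}(V) = \varepsilon$. Since $B$ was arbitrary, $\chi$ is constant on $\restrict{[A]_{\mathrm{Em}}}{S}$.

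I expect the completion step to be the main obstacle. One must verify that $B^{\wedge}$ can always be enlarged to a genuinely strong subtree of $S$ with the same level set, which forces one to handle the case where a node of $B^{\wedge}$ has a successor several levels higher: the intermediate levels must be populated with filler nodes along the appropriate branch of $S$, and this has to be done without perturbing the colour sequence $\sigma$. This is exactly where the strongness of $S$ (full branching at every subsequent level) is indispensable, together with the observation that the filler nodes lie on the same $S$-levels as $B^{\wedge}$ and therefore carry the colours prescribed by $\sigma$; the remaining verification that $V$ meets all three defining conditions of a strong subtree is routine bookkeeping.
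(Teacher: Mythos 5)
Your argument is correct and follows the paper's proof exactly: define the induced coloring on $\mathcal{S}_{\sigma_A}(T)$ via Lemma \ref{lem:envelope}, apply Theorem \ref{thm:Milliken'}, and pull the conclusion back. The only difference is that you spell out the final step the paper leaves implicit --- that every $B \in \restrict{[A]_{\mathrm{Em}}}{S}$ arises as the distinguished subset of some $V \in \mathcal{S}_{\sigma}(S)$, via meet-closedness of strong subtrees and completion of $B^{\wedge}$ --- and your sketch of that step is sound.
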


\begin{proof}
Define $\lambda (V)$ for every $V \in \mathcal{S}_{\sigma _A}(T)$ by the $\chi$-value of its unique subset which
belongs to $[A]_{\mathrm{Em}}$. According to Lemma \ref{lem:envelope}, the map $\lambda$ is well-defined. By Theorem~\ref{thm:Milliken'}, there is $S \in \mathcal{S}_{\Sigma}(T)$ such that $\lambda$ is constant on $\mathcal{S}_{\sigma}(S)$. It follows that $\chi$ is constant on $\restrict{[A]_{\mathrm{Em}}}{S}$. \end{proof}

As a direct consequence, every element $\m{X}$ of $\p_n$ has a big Ramsey degree in $\p _n$ less or equal to the number of embedding types of $\m{X}$ inside $T$. It turns out that when reconstituting copies of $\Q _n$ inside $T$, certain embedding types can be avoided. 

A finite set $A \subset T$ realizes a \emph{Devlin embedding type} when
\begin{enumerate}
	\item $A$ is the set of all terminal nodes of $A^{\wedge}$. 
	\item $|s|\neq|t|$ whenever $s \neq t \in A^{\wedge}$.
	\item $t(|s|) = 0$ for all $s, t \in A^{\wedge}$ such that $|s| < |t|$ and $s \nleq t$.	
\end{enumerate}

Figure \ref{fig:Devlintypes2} represents eight of the sixteen Devlin types that may be realized by a 3-element subset of $T$ in the uncolored case ($n=1$) (Each picture represents a subset of the binary tree).   

\begin{figure}[h]
\begin{center}
\hskip-10pt\includegraphics[scale=0.8]{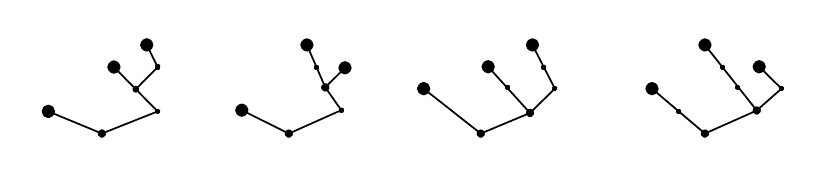}%
\drawat{-59mm}{0mm}{$\emptyset$}
\drawat{-59mm}{12mm}{$1010$}
\drawat{-66mm}{8mm}{$100$}
\drawat{-69mm}{4.5mm}{$0$}%
\hskip-10pt\includegraphics[scale=0.8]{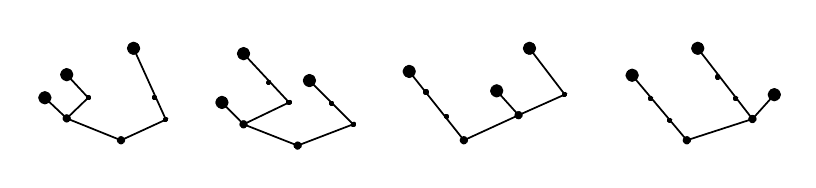}%
\end{center}
\caption{Examples of Devlin types on three elements when $n=1$}
\label{fig:Devlintypes2}
\end{figure}

\begin{lemma}
Every $S \in \mathcal{S}_{\Sigma}(T)$ includes an antichain $X$ such that:
\begin{enumerate}
	\item $(X,X\cap T_1,\ldots,X\cap T_n,<_{lex})$ is isomorphic to $\Q_n$,
	\item Every finite subset of $X$ realizes a Devlin embedding type. 
	\item For every Devlin embedding type $[A]_{\mathrm{Em}}$ and every $Y \subset X$ isomorphic
to $\Q_n$ there exists $B \subset Y$ such that $[B]_{\mathrm{Em}} = [A]_{\mathrm{Em}}$.
\end{enumerate}
\end{lemma}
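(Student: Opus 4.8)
The plan is to construct the desired antichain $X$ inside an arbitrary $S \in \mathcal{S}_{\Sigma}(T)$ by a level-by-level diagonalization, interleaving two competing demands: the homogeneity requirement (3), which asks that $X$ be ``rich'' enough to contain every Devlin type inside every subcopy of $\Q_n$, and the structural requirements (1) and (2), which ask that $X$ be an antichain of the correct isomorphism type all of whose finite subsets are in Devlin position. First I would observe that, since $S$ is a strong subtree with coloring sequence $\Sigma$, I am free to pass to strong subtrees of $S$ at will, and in particular I may assume the branching and level-coloring structure of $S$ is as regular as I like. The core idea, following Devlin's treatment of the $n=1$ case, is to pick the elements of $X$ as terminal nodes that always branch to the left (i.e.\ extend by $0$) except at the single splitting node that distinguishes a new point from the points already chosen; condition (3) in the definition of a Devlin embedding type, namely $t(|s|)=0$ whenever $s \nleq t$, is exactly what this ``branch left by default'' discipline guarantees.

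The key steps, in order, are as follows. First I would fix an enumeration of $\Q_n$ (equivalently of a countable dense structure in $\p_n$) and build $X$ as a copy of $\Q_n$ by a back-and-forth or direct dense-in-itself construction inside $(S,<_{lex})$, using at each stage the density of each color class $T_i$ in $S$ (which holds because $\Sigma$ takes each value infinitely often and $S \in \mathcal{S}_{\Sigma}(T)$). This secures requirement (1). Second, I would arrange the construction so that successive chosen nodes sit at strictly increasing and pairwise distinct levels and so that each new node agrees with the common stem of the previously chosen nodes and then takes the value $0$ at every coordinate past the relevant splitting level; this enforces conditions (2)(ii) and (2)(iii) of the Devlin-type definition and makes every finite $A \subset X$ satisfy (2)(i) automatically, since the $0$-default forces each element of $A$ to be terminal in $A^{\wedge}$. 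This secures requirement (2). Third, for requirement (3), I would invoke the density of $X$ as a copy of $\Q_n$: given any $Y \subset X$ isomorphic to $\Q_n$ and any prescribed Devlin type $[A]_{\mathrm{Em}}$, I can realize a set $B \subset Y$ in that type by choosing its elements one at a time from $Y$, using the density of each color class within $Y$ together with the freedom to choose splitting levels, so that the resulting $B^{\wedge}$ has the prescribed shape, level order, and color sequence.

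The main obstacle, and the step I expect to require the most care, is the simultaneous satisfaction of (2) and (3): the Devlin discipline (branch left by default) severely restricts which antichains are admissible, and one must check that this restriction does \emph{not} destroy the density needed for (3). Concretely, the worry is that after imposing the $0$-default on all off-branch coordinates, the remaining freedom in choosing splitting nodes and their colors is still enough to realize \emph{every} Devlin type inside \emph{every} subcopy $Y$. I would resolve this by the observation that a Devlin type is determined precisely by the relative order of the splitting levels, the left/right pattern along the actual branches, and the color sequence $\sigma_A$, and that all of these data can be prescribed freely when selecting points from a dense copy of $\Q_n$: the density of each $T_i \cap Y$ lets me hit the required colors, and the cofinal branching of $Y$ lets me place splitting nodes at levels in any prescribed order. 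Finally I would remark that (3), once established for $X$ itself, transfers to every $Y \subset X$ isomorphic to $\Q_n$ because any such $Y$ inherits the density of its color classes and the left-branching discipline from $X$, so the same selection argument applies verbatim inside $Y$.
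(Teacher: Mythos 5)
Your overall strategy coincides with the paper's: after normalizing $S=T$, both proofs build the antichain $X$ explicitly by choosing terminal nodes at pairwise distinct levels subject to the ``branch left by default'' discipline, with the colors of the chosen nodes arranged densely so that $(X,<_{lex})$ together with its color classes is a copy of $\Q_n$. The paper packages this as a skeleton tree $W=\{w_f : f\in T\}$ satisfying six explicit coherence conditions and then sets $x_f=w_f^{\frown}01^{\frown}0^i$; requirements (1) and (2) then come out exactly as you describe.

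The one place where your argument does not hold up as written is the justification of (3). You appeal to ``the cofinal branching of $Y$'' to place splitting nodes at levels in any prescribed order, but an arbitrary $Y\subset X$ with $(Y,<_{lex})\cong\Q_n$ has no a priori branching property: lex-density of $Y$ does not by itself imply that a given $y\in Y$ is met by other elements of $Y$ at arbitrarily high levels, nor that the levels of the meets $y\wedge y'$ can be prescribed at will. What makes (3) true is a feature that must be built into $X$ at construction time: the lengths of the splitting nodes are chosen to increase monotonically with respect to a fixed ordering of the index tree (tree-level first, then lexicographically within a level --- conditions (3) and (4) on $W$ in the paper, together with the left-branching condition (6)), so that the relative order of the meet-levels of any candidate $B\subset Y$ can be read off from the combinatorics of the index set; only then do lex-density of $Y$ and density of its color classes suffice to realize every admissible shape-plus-level-order, i.e.\ every Devlin type. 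You should isolate and verify this correlation between the lex order on $X$ and the level order of meets before running your one-point-at-a-time selection of $B$; without it that selection has nothing to work with. (To be fair, the paper itself compresses this final verification into ``one can check''.)
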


\begin{proof}
Without loss of generality, we may assume that $S=T$. Let $W\subset T$ be the $\wedge$-closed subtree of $T$ uniquely determined by the following properties (For an attempt to represent the lowest levels of $W$, see Figure \ref{fig:Devlin}): 
\begin{enumerate}
	\item $root (W) = \emptyset$.
	\item $\forall l\in \N \ \forall 0<i<n \ \ |W\cap T(nl)|=1 \ \ \mathrm{and}  \ \ |W\cap T(nl+i)|=0$.
	\item $\forall l\in \N \ \forall s,t \in W(l) \ \ s<_{lex}t \rightarrow |s|<|t|$.
	\item $\forall l<m\in\N \ \forall s \in W(l) \ \forall t\in W(m) \ \ |s|<|t|$.
	\item $W$ is order-isomorphic to $(T,<_{lex})$. 
	\item $\forall s \in W \ \forall t<s \ \ t \notin W \rightarrow t^{\frown}0 < s$. 
\end{enumerate} 

\begin{figure}[h]
\begin{center}
\hskip-10pt\includegraphics[scale=0.8]{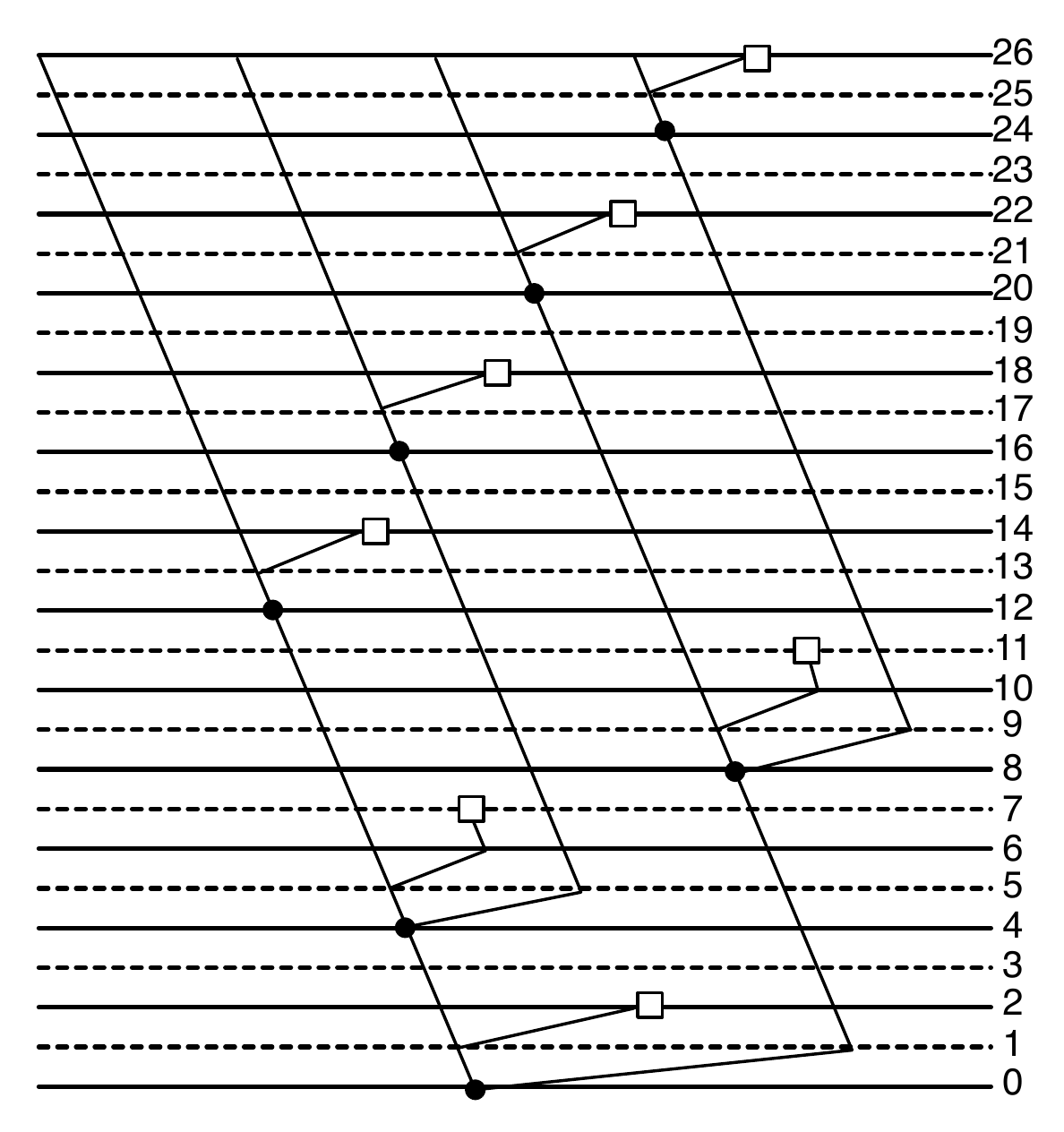}%
\drawat{-35.5mm}{12.5mm}{$x_\emptyset$}%
\drawat{-65.49mm}{19.32mm}{$w_0$}%
\drawat{-52.18mm}{30.53mm}{$x_0$}%
\drawat{-34.5mm}{34mm}{$w_1$}%
\drawat{-21.7mm}{44.5mm}{$x_1$}%
\drawat{-78.45mm}{48.35mm}{$w_{00}$}%
\drawat{-60.76mm}{55.65mm}{$x_{00}$}%
\drawat{-67.5mm}{63mm}{$w_{01}$}%
\drawat{-50.5mm}{70mm}{$x_{01}$}%
\drawat{-54.57mm}{77.15mm}{$w_{10}$}%
\drawat{-38.68mm}{84.4mm}{$x_{10}$}%
\drawat{-42.44mm}{91.73mm}{$w_{11}$}%
\drawat{-26mm}{99mm}{$x_{11}$}%
\end{center}
\caption{$W=\{w_f: f\in T\}$ and $X=\{x_f: f\in T\}$ when $n=2$}
\label{fig:Devlin}
\end{figure}

Let $f \mapsto w_f$ denote the isomorphism between $(T,<_{lex})$ and $W$. Define then $x_f = w_f^{\frown}01^{\frown}0^i$ (here, $0^i$ denotes the sequence with $i$ many $0$'s) where $i$ is such that $0\leq i <n$ and $|f| = i \mod(n)$. Then one can check that for every $Y \subset X$ isomorphic to $\Q_n$, the embedding types of the finite subsets of $Y$ are exactly the Devlin's embedding types. \end{proof}

It follows that every element $\m{X}$ of $\p_n$ has a big Ramsey degree in $\p _n$ equal to the number of embedding types of $\m{X}$ inside $T$. Proceeding by induction on the size of $\m{X}$, it can be shown that this number of embeddings actually only depends on the size of $\m{X}$ and satisfies a recursion formula which allows to identify it with the number $\tan ^{(2|\m{X}|-1)}(0)$. This finishes the proof of Theorem \ref{thm:bigRdP}.

\end{document}